\newcommand*{\house}[1]{%
  \mathord{%
    \mathpalette\@house{#1}%
  }%
}
\newcommand*{\@house}[2]{%
  \dimen@=\fontdimen8 %
      \ifx#1\scriptscriptstyle\scriptscriptfont
      \else\ifx#1\scriptstyle\scriptfont
      \else\textfont\fi\fi
      3 %
  \sbox0{%
    $#1%
      \vrule width\dimen@\relax
      \overline{%
        \kern2\dimen@
        \begingroup 
          #2%
        \endgroup
        \kern2\dimen@
      }%
      \vrule width\dimen@\relax
      \mathsurround=1.5\dimen@ 
    $%
  }%
  \ht0=\dimexpr\ht0-\dimen@\relax
  \dp0=\dimexpr\dp0+2\dimen@\relax
  \vbox{%
    \kern\dimen@ 
    \copy0 %
  }%
}   
\theoremstyle{plain} 
\newtheorem{theorem}{\indent\sc Theorem}[section]
\newtheorem{lemma}[theorem]{\indent\sc Lemma}
\theoremstyle{definition} 
\newtheorem{remark}[theorem]{\indent\sc Remark}
\newtheorem{example}[theorem]{\indent\sc Example}
\newcommand{\C}{\mathbb{C}}
\newcommand{\R}{\mathbb{R}}
\newcommand{\Q}{\mathbb{Q}}
\newcommand{\Z}{\mathbb{Z}}
\newcommand{\N}{\mathbb{N}}
\title{The digit exchanges in the rotational beta expansions \\ of algebraic numbers}
\author{\textsc{Hajime Kaneko}\footnote{ 
\textit{Key words and phrases}.
$\beta$-expansions, nonzero digits, Pisot numbers, Salem numbers} \  and \textsc{Makoto Kawashima}}
\date{} 
\begin{document}

\maketitle

\tableofcontents

\begin{abstract} 
In this article, we investigate the $\beta$-expansions of real algebraic numbers. 
In particular, we give new lower bounds for the number of digit exchanges 
in the case where $\beta$ is a Pisot or Salem number. 
Moreover, we define a new class of algebraic numbers, quasi-Pisot numbers and quasi-Salem numbers, which gives a generalization of Pisot numbers and Salem numbers. \par
Our method is applicable also to the digit expansions of complex algebraic numbers, which gives {{a new estimate}}. 
In particular, we investigate the digits of rotational beta expansion  {{considered}} by Akiyama and Caalim \cite{A-C2}
and zeta-expansion by Surer \cite{Su}, where the base is a quasi-Pisot or quasi-Salem number. 
\end{abstract} 
\section{Introduction}
Let $\beta>1$ be a real number.
In \cite{Re}, R\'{e}nyi {introduced  the representations of real numbers in base $\beta$}, so called $\beta$-expansions. 
Little is known on the digits of $\beta$-expansions of algebraic numbers. 
For instance, if ${{\beta=\,}}b\geq 2$ is an integer, then the $\beta$-expansion coincides with 
the usual base-$b$ expansion. Borel \cite{Bo1} conjectured 
that all algebraic irrational numbers are normal numbers in base-$b$. 
{However, if $b\geq 3$,} then it is still unknown whether the digit 1 appears infinitely many times in the base-$b$ expansions of algebraic irrational numbers. 
 In this article, we investigate the complexity of the digit expansions of real and
complex algebraic numbers.
In particular, we consider the digits of $\beta$-expansions in the case where $\beta$ is a Pisot or Salem number. 
We now recall the definition of Pisot and Salem numbers. {{Let $\beta$ be an algebraic integer}}. 
We call $\beta$ a Pisot number (resp. Salem number) if {{its conjugates over $\Q$,}} except $\beta$ itself have moduli less than $1$ (resp. if {{its conjugates over $\Q$,}} except $\beta$ itself have absolute values not greater than $1$ and there exists a conjugate of $\beta$ with absolute value $1$).

We introduce the notation throughout this article as follows.
We denote the set of nonnegative integers (resp. positive integers) by $\Z_{\ge 0}$ (resp. $\N$). We denote the integral and fractional parts of a real number $x$ by $\lfloor x \rfloor$ and $\{x\}$, respectively. We denote by $\lceil x \rceil$ the minimal integer not less than $x$ and use the Landau symbol $O$ and the Vinogradov symbols $\gg , \ll$ with their usual meaning. 
We denote the algebraic closure of the rational number field by $\overline{\Q}$ and 
{fix an embedding $\overline{\Q}\hookrightarrow \C$.}
For an algebraic number $\beta$, we denote the conjugates of $\beta$ by $\beta_i$ for $1\le i \le [\Q(\beta):\Q]$ with $\beta_1=\beta$. {We assume that $\beta_2$ is the complex conjugate of $\beta$ if $\beta\notin \R$.} 
Moreover, let $\overline{\Z}\hspace{0.7mm}(\subset \overline{\Q})$ be the set of algebraic integers. 

For an algebraic number $\alpha$, we denote {by $\house{\alpha}$ the value} 
$\max_{\sigma}|\sigma(\alpha)|$, where $\sigma$ runs thorough {the embeddings} of $\Q(\alpha)$ to $\C$.

Let $\beta>1$ be a real number. 
The $\beta$-transformation $T_{\beta}:[0,1]\longrightarrow [0,1)$ is defined by $$T_{\beta}(x):=\{\beta x\},$$
for $x\in [0,1]$. Let $\xi$ be a real number with $0\le \xi \le 1$. If $\beta=b\in \Z$, we also assume $\xi<1$. 
For $n\in \N$, we put $t_n(\beta; \xi):=\lfloor \beta T^{n-1}_{\beta}(\xi) \rfloor$.
Then we have $t_n(\beta; \xi)\in \Z \cap [0,\beta).$ The $\beta$-expansion of $\xi$ is defined by  
\begin{align*}
\xi=\sum_{n=1}^{\infty}t_n(\beta; \xi)\beta^{-n}.
\end{align*}
In the case where $\xi$ is a general nonnegative real number, then using a suitable integer $R\geq 0$ with $\beta^{-R}\in [0,1)$, we define the $\beta$-expansion of $\xi$ by 
\begin{align}\label{genbeta}
\xi=\sum_{n=1-R}^{\infty}t_n(\beta;\xi) \beta^{-n}:=\beta^R \sum_{n=1}^{\infty}t_n(\beta;\beta^{-R}\xi)\beta^{-n}.
\end{align}
{{Note that the choice of $R$ is not unique (although its choice does not affect the subsequent digit asymptotics).}}
For a positive integer $N$, 
the number of digit exchanges $\gamma(\beta,\xi;N)$ and the number of nonzero digits $\nu(\beta,\xi;N)$ are defined by
\begin{align*}
&\gamma(\beta,\xi;N):={\rm{Card}}\{n\in \N\mid n\le N, t_n(\beta;\xi)\neq t_{n+1}(\beta;\xi)\},\\
&\nu(\beta,\xi;N):={\rm{Card}}\{n\in \N\mid n\le N, t_n(\beta;\xi)\neq 0\},
\end{align*}
respectively, where Card denotes the cardinality. 
It is easily seen that we have the following relations among $\gamma(\beta,\xi;N)$ and $\nu(\beta,\xi;N)$:
\begin{align} \label{relation gamma nu}
\nu(\beta,\xi;N)\ge \dfrac{1}{2}\gamma(\beta,\xi;N)+O(1).
\end{align}

\bigskip

 R\'{e}nyi \cite{Re} showed for any $\beta>1$ that there exists a unique $T_{\beta}$-invariant measure $p_{\beta}$ on 
$[0,1]$ which is absolutely continuous with respect to the Lebesgue measure on $[0,1]$. 
In particular, $p_{\beta}$ is ergodic. We recall the $\beta$-normality of $\xi\in [0,1]. $ Let $S:=\Z\cap [0,\beta)$. 
Let $1\leq k<\ell$. For $\xi\in [0,1]$, we define the finite word $w_{k,\ell}(\beta;\xi)$ by 
\[
w_{k,\ell}(\beta;\xi):=t_{k}(\beta;\xi)t_{k+1}(\beta;\xi)\cdots t_{\ell}(\beta;\xi),
\]
for any $\xi\in [0,1]$. For any word $v=v_1\cdots v_k$ of length $k$, we define the cylinder set $[v]$ by 
\[
[v]_{\beta}:=\{\xi\in [0,1]\mid w_{1,k}(\beta;\xi)=v\}.
\]
A word $v$ is called admissible if $[v]_{\beta}$ is not empty. Recall that $\xi\in [0,1]$ is $\beta$-normal if 
\[
{\lim_{N\to\infty}\frac{1}{N}{\rm{Card}}\{n\leq N\mid w_{n,n+k-1}(\beta;\xi)= v \}=p_{\beta}([v]_{\beta})}.
\]
for any admissible finite word $v$ of arbitrary length $k$. 
Adamczewski and Bugeaud \cite{A-B} introduced a hypothesis on $\beta$-normality as follows: 
Let $\beta>1$ and $\xi\in[0,1]$ be algebraic numbers. Then $\xi$ is $\beta$-normal or $\xi$ has ultimately periodic 
$\beta$-expansion. 

Suppose that $\xi$ is $\beta$-normal. 
Then the sequences $(N^{-1}\gamma(\beta,\xi;N))_{N\geq 1}$ and $(N^{-1}\nu(\beta,\xi;N))_{N\geq 1}$ converge to 
positive values. 
The lower bounds for the number of digit exchanges of algebraic numbers were studied 
in \cite{B1,B2,B-E,K1,K2}, 
which gives partial results on the  $\beta$-normality of algebraic numbers. 
In particular, Bugeaud \cite{B2} proved the following: Let $\beta$ be a Pisot or Salem number and 
 $\xi$ an algebraic number with $t_{n}(\beta;\xi)\neq t_{n+1}(\beta;\xi)$ for infinitely many $n$. Then there exist effectively computable positive numbers $C_1(\beta,\xi)$ and $C_2(\beta,\xi)$, depending only on $\beta$ and $\xi$, such that  
\begin{align}\label{lower bound}
\gamma(\beta,\xi;N)\ge C_1(\beta,\xi)\dfrac{({\rm{log}}N)^{3/2}}{({\rm{log}}{\rm{log}}N)^{1/2}},
\end{align}
 for any $N\ge C_2(\beta,\xi)$.  
In particular, combining $(\ref{relation gamma nu})$ and $(\ref{lower bound})$, we have 
\begin{align} \label{lower bound 2}
\nu(\beta,\xi;N)\ge \dfrac{C_1(\beta,\xi)}{3}\dfrac{({\rm{log}}N)^{3/2}}{({\rm{log}}{\rm{log}}N)^{1/2}},
\end{align} 
for any sufficiently large $N$.
Lower bound $(\ref{lower bound 2})$ was improved in $\cite{K3}$ and $\cite{K4}$ as follows:
\begin{theorem} \label{Kaneko 1}  {\rm{\cite[Theorem $2.2$]{K4}}} 
Let $\beta$ be a Pisot or Salem number and $\xi$ an algebraic number with $[\Q(\beta,\xi):\Q(\beta)]=D$.
Suppose there exists a sequence $\bold{t}=(t_n)_{n\in \Z_{\ge{{1}}}}$ of integers satisfying the following two assumptions$:$

$({\rm{i}})$ There exists a positive integer $B$ such that, for any $n\in \Z_{\ge{{1}}}$,
$$0\le t_n\le B.$$
Moreover, there exist infinitely many $n$ such that $t_n>0$.
 
$({\rm{ii}})$ We have $$\xi=\sum_{n={{1}}}^{\infty}t_n\beta^{-n}.$$
Then there exist effectively computable positive constants $C_{3}=C_{3}(\beta,\xi,B)$ and $C_{4}=C_{4}(\beta,\xi,B)$, depending only on $\beta,\xi$ and $B$, such that, for any integer $N$ with $N\ge C_{4}$,
\begin{align*}
\lambda(\Gamma(\bold{t};N))\ge C_{3}\dfrac{N^{1/D}}{({\rm{log}}N)^{1/D}},
\end{align*}
where $\Gamma(\bold{t}):=\{n\in \Z_{\ge{{1}}}\mid t_n\neq 0\}$ and $\lambda(\Gamma(\bold{t};N)):={\rm{Card}}([{{1}},N]\cap \Gamma(\bold{t})).$
\end{theorem}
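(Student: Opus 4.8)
The plan is to derive the lower bound from an upper estimate on how fast the positions of the nonzero digits can grow. Write $\Gamma(\mathbf{t})=\{n_1<n_2<\cdots\}$; since infinitely many $t_n$ are positive this set is infinite. For $k\ge 1$ set
\[
\xi_k:=\sum_{j=1}^{k}t_{n_j}\beta^{-n_j}=A_k\beta^{-n_k},\qquad A_k:=\sum_{j=1}^{k}t_{n_j}\beta^{\,n_k-n_j}\in\Z[\beta],
\]
the inclusion holding because $\beta\in\overline{\Z}$ and the exponents $n_k-n_j$ are $\ge 0$. As all digits are nonnegative and at least one later digit is positive, $0<\xi-\xi_k=\sum_{j>k}t_{n_j}\beta^{-n_j}$, and one checks immediately that $\beta^{-n_{k+1}}\ll\xi-\xi_k\ll\beta^{-n_{k+1}}$ with implied constants depending only on $\beta$ and $B$. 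Let $g(X)=\sum_{i=0}^{D}a_iX^i\in\Z[\beta][X]$ be a minimal polynomial of $\xi$ over $\Q(\beta)$, scaled to lie in $\Z[\beta][X]$; then $g(\xi)=0$, so Taylor expansion gives $|g(\xi_k)|\asymp|\xi-\xi_k|\asymp\beta^{-n_{k+1}}$ for $k$ large, and $g(\xi_k)\ne 0$: if $D\ge 2$ because an irreducible polynomial of degree $\ge 2$ over $\Q(\beta)$ has no root in $\Q(\beta)\ni\xi_k$, and if $D=1$ because $g(\xi_k)=0$ would force $\xi=\xi_k$, contradicting that infinitely many digits are positive.

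First I would extract a basic recursion. Put $G_k:=\beta^{Dn_k}g(\xi_k)=\sum_{i=0}^{D}a_iA_k^{\,i}\beta^{(D-i)n_k}$, a nonzero algebraic integer lying in $\Z[\beta]$, with $|G_k|\asymp\beta^{Dn_k-n_{k+1}}$. Here the Pisot/Salem hypothesis is decisive: every conjugate $\beta_i$ with $i\ge 2$ satisfies $|\beta_i|\le 1$, hence $|A_k^{(i)}|\le B/(1-|\beta_i|)$ if $|\beta_i|<1$ (Pisot) and $|A_k^{(i)}|\le Bk$ if $|\beta_i|=1$ (Salem), and since also $|\beta_i^{(D-i)n_k}|\le 1$ one obtains $|G_k^{(i)}|\ll_{\beta,\xi,B}k^{D}$ for $2\le i\le d:=[\Q(\beta):\Q]$. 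The product formula $\prod_{i=1}^{d}|G_k^{(i)}|=|N_{\Q(\beta)/\Q}(G_k)|\ge 1$ now forces $|G_k|\gg k^{-D(d-1)}$, i.e.
\[
n_{k+1}\le Dn_k+O(\log k).
\]
If $D=1$, or if $\beta$ is a rational integer ($d=1$), this already gives $\lambda(\Gamma(\mathbf{t};N))\gg N/\log N$ after the elementary inversion in the last step.

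For $D\ge 2$ the recursion just obtained is not enough — it only prevents $(n_k)$ from growing faster than $D^{k}$, which would yield the much weaker bound $\lambda\gg\log N$ — and upgrading it is the part I expect to be the main obstacle. The point is to use the algebraicity of $\xi$ over $\Q(\beta)$ at several scales simultaneously: from consecutive good approximants $\xi_k,\xi_{k+1},\dots$ and the polynomial $\prod_i(X-\xi_{k+i})$, one reduces modulo $g$ and clears the power of $\beta$ in the denominator to produce an algebraic number assembled from the ``tails'' $\beta^{n_{k+i}}\xi-A_{k+i}$; its archimedean absolute value, its conjugates over $\Q(\beta)$ (again controlled via $|\beta_i|\le 1$), and the cancellation between the factors $\beta^{n_{k+i}}$ and $\beta_i^{-n_{k+i}}$ at the non-principal places, combine — after a bootstrap on $k$ showing that a near-extremal step $n_{k+1}\approx Dn_k$ cannot persist over a long block of indices — to the estimate $n_k\ll_{\beta,\xi,B}k^{D}\log k$. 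The Salem case is dealt with by carrying the extra polynomial factor $k^{D}$ from the conjugate bound throughout, which only weakens the constants.

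Finally I would invert. Writing $L:=\lambda(\Gamma(\mathbf{t};N))$ we have $n_L\le N<n_{L+1}$; since $L\le N$, the estimate $n_{L+1}\ll(L+1)^{D}\log(L+1)\le(L+1)^{D}\log N$ yields $N<c\,(L+1)^{D}\log N$ for an effective $c=c(\beta,\xi,B)$, and therefore $L\ge C_3\,N^{1/D}(\log N)^{-1/D}$ for all $N\ge C_4$, with $C_3,C_4$ effectively computable from $\beta,\xi,B$ — which is the assertion.
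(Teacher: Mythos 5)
Your first step is sound: the partial sums $\xi_k$, the algebraic integers $G_k=\beta^{Dn_k}g(\xi_k)\in\Z[\beta]$, the conjugate bounds coming from $|\beta_i|\le 1$, and the norm inequality do give $n_{k+1}\le Dn_k+O(\log k)$ effectively, and this settles the case $D=1$. But for $D\ge 2$ the entire content of the theorem is the upgrade from that recursion (which, as you note, only yields $\lambda(\Gamma(\mathbf{t});N)\gg\log N$) to $n_k\ll k^{D}\log k$, and at precisely that point your argument stops being a proof. The passage about forming $\prod_i(X-\xi_{k+i})$, reducing modulo $g$, exploiting ``cancellation at the non-principal places,'' and running ``a bootstrap showing that a near-extremal step cannot persist'' names a hoped-for conclusion without supplying a mechanism: no quantity is defined whose size is actually controlled from both sides, and it is not clear how a block of consecutive approximants would produce the exponent $1/D$ rather than another logarithmic saving. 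This is a genuine gap, not a routine verification left to the reader.

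For comparison, the method this paper uses for its own Theorem 2.2 (the digit-exchange analogue proved in Section 3, following the same strategy as [K4]) is structured quite differently. One expands $\xi^k=\sum_m\rho(k;m)\beta^{-m}$, observes that $\rho(k;m)$ is supported on the sumset $k\Gamma$, whose truncation has cardinality at most $\lambda(\Gamma;N)^k$, and introduces for every shift $R$ the linear form $Y_R=\sum_{k=1}^{D}B_k\sum_{m\ge 1}\beta^{-m}\rho(k;m+R)$. Three facts are then played against each other: (a) $Y_R\ne 0$ and a Liouville-type lower bound $|Y_R|>R^{-C_7}$ from the conjugates; (b) the count of $R<N$ with $|Y_R|$ above a fixed constant is $\ll\log N+\lambda(\Gamma;N)^{D}$, because $\sum_R|Y_R|$ is controlled by the size of the sumset; (c) inside any gap of $(D-1)\Gamma$ the recursion $Y_{R-1}=\beta^{-1}B_D\rho(D;R)+\beta^{-1}Y_R$ forces consecutive large values of $|Y_R|$ to be at most $O(\log N)$ apart, so the count in (b) is also $\gg N/\log N-\lambda(\Gamma;N)^{D-1}$. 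Comparing the two counts gives $\lambda(\Gamma;N)^{D}\gg N/\log N$. It is this double counting of the set $\{R<N:|Y_R|\ge C_9\}$ --- not a recursion on the gaps $n_{k+1}-n_k$ --- that produces the exponent $1/D$, and it is the idea your sketch is missing.
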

We note that the theorem above is also applicable to general representations of algebraic real numbers $\xi$ 
by infinite series in base-$\beta$. 

It is natural to conjecture that {{a counterpart}} of Theorem \ref{Kaneko 1} holds also for the number of digit exchanges 
{in beta expansion.} 
Consider the case where $\beta=b$ is a integer greater than 1. If the minimal polynomial of algebraic irrational $\xi$ satisfies certain assumptions, then it is known for any sufficiently large $N$ that $\gamma(\beta,\xi;N)\gg N^{1/d}$, 
where $d=[\Q(\xi):\Q]$ (see \cite{K1} and \cite{K2}). 

The main results of this article give  {{a counterpart}} of Theorem \ref{Kaneko 1} for more 
general Pisot and Salem numbers $\beta$. Moreover, our method is also applicable to  a broader class of algebraic numbers, {{that we call}} quasi-Pisot numbers and quasi-Salem numbers, which we define in Section 2. {Thus, our main results also give} new lower bounds for the number of {digit exchanges and the number of} nonzero digits {of more general numerical representation.} 
{In fact,} we also consider asymptotic behaviour of the digits in negative beta expansion and 
{rotational beta} expansion  in Section 3. 
We prove our {main results} in Section 4. 
\section{Main results}
To state our {main results}, we introduce quasi-Pisot and quasi-Salem numbers as follows:
For a complex number $z$, we denote its complex conjugate by $\overline{z}$.
Let $\beta$ be an algebraic integer with $|\beta|>1$.
We say $\beta$ is a quasi-Pisot number (resp. quasi-Salem number) if $|\beta_i|<1$ 
{for any $\beta_i\notin \{\beta,\overline{\beta}\}$} 
(resp. $|\beta_i|\le 1$ {for any $\beta_i \notin\{\beta,\overline{\beta}\}$} and there exists $1\le j \le [\Q(\beta):\Q]$ satisfying $|\beta_j|=1$).
For instance, any rational integer $b$ with $|b|\ge 2$ is a quasi-Pisot number. Any quadratic algebraic integer $\beta$ with $|\beta|>1$ and $\beta\notin \R$ is a quasi-Pisot number. If $\beta$ is a negative real number such that 
$-\beta$ is a Pisot number (resp. Salem number), then $\beta$ is a quasi-Pisot number (resp. quasi-Salem number). 
{See also example \ref{exa:cyc} for another example of quasi-Pisot numbers.} 
For examples of complex quasi-Pisot and quasi-Salem numbers, see \cite{D-J-S}  {{and \cite[Tables $6.3$ and $6.4$]{H-J}}}. 
For instance, two zeros $\beta,\overline{\beta}$ of $X^8-X^7+X^6-X^4+X^2-X+1$ with $|\beta|>1$ are 
quasi-Salem numbers. 

We give lower bounds for the digit exchanges in the representations of complex algebraic numbers 
by infinite series in base-$\beta$ in the case where $\beta$ is a quasi-Pisot or quasi-Salem number. 


\begin{theorem} \label{Kaneko}
Let $\beta$ be a quasi-Pisot or quasi-Salem number and $\xi$ an algebraic number with $D=[\Q(\beta,\xi):\Q(\beta)]$.
Let $S$ be a finite subset of $\Z[\beta]$ with $0\in S$.
Moreover, if $S\not\subset \Z$, then suppose that
$\beta\not\in\R$ and there exists {{an imaginary}} quadratic algebraic integer $\alpha\in \Q(\beta)$
such that $S$ is a finite subset of the ring of integers of $\Q(\alpha)$.

Let $R\geq 0$ and $\bold{t}=(t_n)_{n\ge 1-R}$ be a sequence of elements of $S$ satisfying $\xi={\displaystyle{\sum_{n=1-R}^{\infty}}}t_n\beta^{-n}.$
Assume there exist $\pi,A_0,A_1,\ldots,A_D\in \Z[\beta]$ with $\pi\neq 0, A_D\neq 0$ 
{satisfying the following$:$} 
\begin{align*}
&({\rm{i}}) \ A_D \xi^D+A_{D-1} \xi^{D-1}+\cdots+A_0=0, \\
&({\rm{ii}}) \ \dfrac{(\beta-1)^{D-k}A_k}{\pi} \in \Z[\beta] \ \text{for} \ 1 \le k \le D,\\ 
&({\rm{iii}}) \ \dfrac{(\beta-1)^{D}\beta^n A_0}{\pi} \notin \Z[\beta] \ \text{for all} \ n\in \N.
\end{align*}
Then there exist effectively computable {positive numbers $C_5$ and $C_6$} 
such that 
\begin{align*}
\gamma(\bold{t};N):={\rm{Card}}\{n\in \N\mid 
 n\le N, t_n\neq t_{n+1}\}\ge 
 {C_5\left(\dfrac{N}{{\rm{log}}N}\right)^{1/D} },
\end{align*}
for all $N\ge C_6$.
\end{theorem}
\begin{remark}\label{remfirst}
Let $\xi$ be an algebraic number such that $\sum_{n=0}^D A_n \xi^n=0$, where $A_0,\ldots,A_{D}\in \Z[\beta]$ and $A_D\ne 0$. Assume that ({\rm{i}}), ({\rm{ii}}), and ({\rm{iii}}) in Theorem \ref{Kaneko} holds with some $\pi\in \Z[\beta]\backslash\{0\}$. 
Let $\rho$ be any element in $\Z[\beta]$. Then $\xi+\rho$ also satisfies the same assumptions. In fact, 
putting 
$P(X)=\sum_{n=0}^D \widetilde{A_n}X^n:=\sum_{n=0}^{D}A_n(X-\rho)^n,$ we see that $P(\xi+\rho)=0$. 
Moreover, $\pi, \widetilde{A_0},\ldots,\widetilde{A_D}$ fulfill ({\rm{i}}), ({\rm{ii}}), and ({\rm{iii}}). 
 Hence, it suffices to prove Theorem \ref{Kaneko} in the case of $R=0$, 
by considering $\xi+\rho$ with $\rho=-\sum_{n=1-R}^0 t_n \beta^{-n}\in\Z[\beta]$ when $R\geq 1$.  
\end{remark}
\begin{remark}
{\textcolor{black}{Suppose that the assumption on Theorem $\ref{Kaneko}$ holds and $R=0$. }}
Then we have ${\rm{Card}}\{n\in \N \ | \ t_n\neq t_{n+1}\}=\infty.$
{Suppose on the contrary that} we have ${\rm{Card}}\{n\in \N \ | \ t_n\neq t_{n+1}\}<\infty.$ 
Then there exist $t\in S$ and $N_1\in \Z_{\ge 0}$ satisfying 
$t_n=t$ for all $n>N_1$.
Then we have 
\begin{align}
{\xi=\sum_{n=1}^{N_1}t_n\beta^{-n}+\sum_{n=N_1+1}^{\infty}t\beta^{-n} 
   =\sum_{n=1}^{N_1}t_n\beta^{-n}+\dfrac{t\beta^{-N_1}}{\beta-1}.} \label{D=1}
\end{align}
{By equality} $(\ref{D=1})$, we have $D=[\Q(\beta,\xi):\Q(\beta)]=1$. 
Then by assumptions $({\rm{i}})$ and $({\rm{ii}})$, we obtain
\begin{align*}
&A_0=-A_1\xi \in \Z[\beta], \ \ \ \dfrac{A_1}{\pi}\in \Z[\beta]. 
\end{align*}
Combining $(\ref{D=1})$ and the above relations, we have
\begin{align*}
\dfrac{(\beta-1)\beta^{N_1}A_0}{\pi}&=-\dfrac{A_1}{\pi}\cdot (\beta-1)\beta^{N_1}\xi\\
                                                &=-\dfrac{A_1}{\pi}\cdot  (\beta-1)\beta^{N_1} \left(\sum_{n=1}^{N_1}t_n\beta^{-n}+\dfrac{t\beta^{-N_1}}{\beta-1}\right)\in \Z[\beta],
\end{align*}
which contradicts assumption $({\rm{iii}})$. 
\end{remark}
{{In the theorem below, we treat the case where $\eta$ is a complex number of the form $\sum_{n=0}^{\infty}s_n\beta^{-n}$ for the technical reason of the proof.}}
\begin{theorem} \label{thm3}
Let $\beta$ be a quasi-Pisot or quasi-Salem number and $\eta$ an algebraic number with $D=[\Q(\beta,\eta):\Q(\beta)]$. 
 Let $S$ be a finite subset of $\Z[\beta]$ with $0\in S$.
Moreover, if $S\not\subset \Z$, then suppose that
$\beta\not\in\R$ and there exists {{an imaginary}} quadratic algebraic integer $\alpha\in \Q(\beta)$
such that $S$ is a finite subset of the ring of integers of $\Q(\alpha)$.


Let $\bold{s}=(s_n)_{n\ge 0}$ be a sequence of elements of $S$ satisfying 
$\eta=\sum_{n=0}^{\infty}s_n\beta^{-n}$. 
Assume there exist $B_0\in \Q(\beta)$ and $B_1,\ldots,B_D\in \Z[\beta]$ with $B_D\neq 0$ 
{satisfying the following$:$} 

\begin{align}
&\sum_{k=0}^{D}B_k\eta^k=0,  \label{cond-1}\\ 
&B_0\beta^n\notin \Z[\beta] \ \text{for all} \ n\in \N.  \label{cond-3}
\end{align}
Then there exist effectively computable {positive numbers $C_{7}$ and $C_{8}$} 
such that 
\begin{align}\label{eqn-main}
\lambda(\bold{s};N):={\rm{Card}}\{n\in \Z_{\geq 0}\mid 
 n< N, s_n\neq 0\}\ge C_{7}\left(\dfrac{N}{{\rm{log}}N}\right)^{1/D},
\end{align}
for all $N\ge C_{8}$.
\end{theorem}
In Example \ref{ex 1} and examples in Section 3, the implied constants in the symbol $\gg$
are positive and effectively computable.
\begin{example} \label{ex 1}
Let $\beta$ be a Pisot or Salem number. 
Let $p$ be a prime number which is coprime to $\beta(\beta-1)$ and unramified in $\mathcal{O}_{\Q(\beta)}$. 
Let $D$ be a positive integer. 
We consider the $\beta$-expansion $\sum_{n=1}^{\infty} t_n(\beta;\xi) \beta^{-n}$ of the number $\xi:=p^{-1/D}$. 
We see that $[\Q(\beta,\xi):\Q(\beta)]=D$. 
In fact,  let $\mathcal{P}\subset \mathcal{O}_{\Q(\beta)}$ be a prime ideal over $p$ and 
$\mathcal{R}$ the local ring of $\mathcal{O}_{\Q(\beta)}$ at  $\mathcal{P}$. 
using  Eisenstein irreducibility criterion for polynomials, we get that the polynomial $X^D-p$ is irreducible in $\Q(\beta)[X]$. Thus,  $pX^D-1$ is  irreducible in $\mathcal{R}[X]$, and so irreducible in $\Q(\beta)[X]$ by 
Gauss's lemma. 
By putting $A_DX^D+\cdots+A_0:=pX^D-1$ and $\pi:=p$, assumptions $({\rm{i}})$, $({\rm{ii}})$ and $({\rm{iii}})$ in Theorem $\ref{Kaneko}$ are satisfied. Thus, we obtain 
\[
\gamma(\beta,\xi;N)\gg \left(\frac{N}{\log N}\right)^{1/D},
\]
for any sufficiently large $N$. 
\end{example}

\section{Application to negative and rotational beta expansion}

Let $(X,\mathcal{B},\mu,T)$ be an ergodic measure-preserving system on a compact metric space $X$ with 
sigma-algebra $\mathcal{B}$ of Borel sets in $X$. Then $x\in X$ is called $T$-generic if 
\[
\lim_{N\to\infty}\frac{1}{N}\sum_{n=0}^{N-1}f (T^n x)=\int_X f d\mu,
\]
for any continuous function $f$ on $X$. If $(X,\mathcal{B},\mu,T)=([0,1],\mathcal{B},p_{\beta},T_{\beta})$, then 
$x$ is $T_{\beta}$-generic if and only if $x$ is $\beta$-normal. 

Ito and Sadahiro \cite{I-S} introduced the negative beta  expansions of real numbers, which gives a numeration  system 
where the base is a negative real number. The negative beta expansion is defined in terms of the iteration of the map 
$\widetilde{T_{-\beta}}: [-\beta/(\beta+1),1/(\beta+1)]\to [-\beta/(\beta+1),1/(\beta+1))$ defined by 
\[
\widetilde{T_{-\beta}}(x)=\left\{-\beta x+\frac{\beta}{\beta+1}\right\}-\frac{\beta}{\beta+1},
\]
where $\beta>1$ is a real number. 
Applying the theorem by Li and Yorke \cite{L-Y}, Ito and Sadahiro \cite{I-S} verified that 
there exists a unique $\widetilde{T_{-\beta}}$-invariant {measure $p_{-\beta}$} which is absolutely continuous with respect to the 
Lebesgue measure, and so {$p_{-\beta}$ is} ergodic. 

We now introduce a modified negative beta expansion studied by Liao and Steiner \cite{L-S}. 
Let $T_{-\beta}: [0,1]\to (0,1]$ be defined by $T_{-\beta}(x):=1-\{\beta x\}$, 
where $\widetilde{T_{-\beta}}$ is conjugate to $T_{-\beta}$ through the conjugacy function $f(x)=(\beta+1)^{-1}-x$. 
The $(-\beta)$-expansion of $\xi\in[0,1]$ is defined as 
\[
x=\sum_{n=1}^{\infty} t_n(-\beta;\xi) (-\beta)^{-n},
\]
where $t_n(-\beta;\xi)=\lfloor \beta T_{-\beta}^{n-1}(\xi)\rfloor+1\in \Z\cap[1,1+\beta]$. 
In the case where $\xi$ is a general real number, using a suitable integer $R\geq 0$ with $(-\beta)^{-R}\in [0,1]$, 
we define the $(-\beta)$-expansion of $\xi$ in the same way as (\ref{genbeta}). 
{{Note that the choice of $R$ is not unique (although its choice does not affect the subsequent digit asymptotics).}}
For more general numeration systems of real numbers related to beta expansion, 
see for instance  \cite{F1,G1}.
As {{a counterpart}} of the hypothesis on $\beta$-normality stated in Section 1, 
it is natural to conjecture that if $\beta>1$ and {$\xi\in \R$ are} algebraic numbers, then $\xi$ is $T_{-\beta}$-generic 
or the {$(-\beta)$-expansion} of $\xi$ is ultimately periodic. 
We consider the number of digit  exchanges $\gamma(-\beta,\xi;N)$ defined by 
\[\gamma(-\beta,\xi;N):={\rm{Card}}\{n\in \N\mid n\le N, t_n(-\beta;\xi)\neq t_{n+1}(\beta;\xi)\}.\]
\begin{example}
Let $\beta$ be a Pisot or Salem number. 
Let $p$ be a prime number which is coprime to $\beta(\beta-1)$ and unramified in $\mathcal{O}_{\Q(\beta)}$. 
Let $D$ be a positive integer. 
We consider the  $(-\beta)$-expansion $\sum_{n=1}^{\infty} t_n(\beta;\xi) \beta^{-n}$ of $\xi$, where 
$\xi$ is a unique zero of the polynomial $pX^D+pX^{D-1}+\cdots+pX-1$ with $0<\xi<1$. In the same way as Example \ref{ex 1}, we see that 
\[
\gamma(-\beta,\xi;N)\gg \left(\frac{N}{\log N}\right)^{1/D},
\]
for any sufficiently large $N$. 
\end{example}

Akiyama and Caalim \cite{A-C2} 
defined a rotational beta expansion, which is  a natural generalization of beta expansion for the complex plane. 
We introduce a special version of this expansion. 
Let $\beta$ be a complex number with $\beta\not\in \R$ and $|\beta|>1$. 
Let $\tau_1, \tau_2 \in \C\backslash \{0\}$ with $\tau_1/\tau_2\not\in \R$. 
Denote $F:=\{ x\tau_1+y\tau_2\mid x\in [-1/2,1/2) , \ y\in[-1/2,1/2)\}$. Let $\overline{F}$ be the closure of $F$. 
Define a map $T=T_{\beta,\tau_1,\tau_2}:\overline{F}\to F$ by 
\begin{align}\label{defdelta}
T(z):=\beta z-\delta(z),
\end{align}
where $\delta(z)$ is {a unique element} in $\Z \tau_1+\Z\tau_2$ satisfying $\beta z-\delta(z)\in F$. 
We denote by $S=S(\beta,\tau_1,\tau_2)$ the set of digits 
$\delta(z)$ with $z\in \overline{F}$. 
{Note that $S$ is a finite set.} 
Then the rotational $\beta$ expansion, or simply $\beta$-expansion, of $\xi\in F$ is defined by 
\[\xi=\sum_{n=1}^{\infty} d_n \beta^{-n},\]
where $d_n=d_n(\beta,\tau_1,\tau_2;\xi)=\delta(\beta T^{n-1}(z))\in S$. 
In the case where $\xi$ is a general complex number, using a suitable nonnegative integer $R$ with $\beta^{-R}\xi\in F$, 
we define the rotational $\beta$ expansion of $\xi$ in the same way as (\ref{genbeta}). 
We define the number of digit  exchanges in the rotational beta expansion of $\xi$ by 
\[\gamma(\xi;N)=\gamma(\beta,\tau_1,\tau_2,\xi;N):={\rm{Card}}\{n\in \N\mid n\le N, 
d_n(\beta,\tau_1,\tau_2;\xi) \neq d_{n+1}(\beta,\tau_1,\tau_2;\xi)\}.\]

Akiyama and Caalim \cite{A-C2} gave a sufficient condition for $\beta,\tau_1,\tau_2$ {{which guarantee the uniqueness of absolutely continuous}} 
invariant probability measure {$p_{\beta}=p_{\beta,\tau_1,\tau_2}$} 
on $F$, and $p_{\beta}$ is equivalent to 
the Lebesgue measure {on $F$.} 

{Surer \cite{Su} also investigated a numerical system of complex numbers called zeta-expansion.} We introduce a special version of this numerical system. 
Let again $\beta$ be a complex number with $\beta\not\in \R$ and $|\beta|>1$. 
Set $\tau_1:=1$ and $\tau_2:=-\overline{\beta}$. 
Then we have $F=\{x-y\overline{\beta}\mid x\in [-1/2,1/2) , \ y\in[-1/2,1/2)\}$ and 
$\beta F=\{{{-|\beta|^2 y+\beta x}}\mid x\in [-1/2,1/2) , \ y\in[-1/2,1/2)\}$. 
It is remarkable that if ${{z}}\in F$, then we have $\delta(z)\in \Z$ because the imaginary parts of $-\overline{\beta}$ and $\beta$ coincide, where $\delta(z)$ is defined by (\ref{defdelta}). Then the zeta-expansion of $\xi\in \C$ is defined by 
\[\xi=\sum_{n=1-R}^{\infty} d_n(\beta,1,-\overline{\beta};\xi) \beta^{-n}.\]

\begin{example}
Let $\beta$ be a quasi-Pisot or quasi-Salem number. 
Let $p$ be a prime number which is coprime to $\beta(\beta-1)$ and unramified in $\mathcal{O}_{\Q(\beta)}$. 
Let $D$ be a positive integer. 
We consider the {zeta-expansion of $\xi$}, where 
$\xi$ is a zero of the polynomial $pX^D+pX^{D-1}+\cdots+pX-1$. In the same way as Example \ref{ex 1}, we see that 
\[
\gamma(\beta,1,-\overline{\beta};\xi)\gg \left(\frac{N}{\log N}\right)^{1/D},
\]
for any sufficiently large $N$  
because the digits of zeta-expansions are rational integers. 
\end{example}


We give an example of the digit exchanges for a rotational beta expansion whose digits are not generally rational integers.

\begin{example}\label{exa:cyc}
Let $\zeta_7:=e^{2\pi i/7}$ be  the primitive $7$-th root of unity. 
For an integer $a\ge 2$, we put $$\xi_a:=\zeta^{(1-a)/2}_7\dfrac{\zeta^a_7-1}{\zeta_7-1}.$$ Then we have $\xi_a=\pm\tfrac{\sin(\pi a/7)}{\sin(\pi/7)}\in \R$.
Define  the multiplicative group $C:=\{\zeta^{m_1}_7\xi^{m_2}_2\xi^{m_3}_3\mid m_1,m_2,m_3\in \Z\}$. Note that $C$ is called the group of cyclotomic units of $\Q(\zeta_7)$ and 
 $C$ is a finite index subgroup of the units group of $\Z[\zeta_7]$ (see Section $8$ in \cite{W}).  Let $m_1,m_2,m_3$ be positive integers with $1\le m_1 \le 6$ and 
\begin{align} \label{m1m2}
m_2{\rm{log}}(1.247)+m_3{\rm{log}}(0.554)<0.
\end{align}
Put  $\beta:=\zeta^{m_1}_7\xi^{m_2}_2\xi^{m_3}_{{{3}}}\in C$. 
Let $\sigma_i\in {\rm{Gal}}(\Q(\zeta_7)/\Q)$ with $\sigma_i(\zeta_7)=\zeta^i_7$ for $1\le i \le 6$. Remark $\sigma_i(\xi_a)=\xi_{ia}\xi^{-1}_i$ and 
\begin{align*}
&|\xi_2|=1.8019377358\ldots \enspace, \ \ |\xi_3|=2.24697960372\ldots \enspace, \\
&|\sigma_2(\xi_2)|=1.24697960372\ldots \enspace, \ \ |\sigma_2(\xi_3)|=0.55490813208\ldots \enspace,\\
&|\sigma_3(\xi_2)|=0.445041867911\ldots \enspace, \ \ |\sigma_3(\xi_3)|=0.8019377358\ldots \enspace.
\end{align*}
By {{the}} above equalities and $(\ref{m1m2})$, we obtain 
$$|\beta|=|\sigma_6(\beta)|>1, \ \  |\sigma_3(\beta)|=|\sigma_4(\beta)|<|\sigma_2(\beta)|=|\sigma_5(\beta)|<1.$$
Thus the number $\beta$ is a quasi-Pisot number and $\Q(\zeta_7)=\Q(\beta)$. 
 Using the Legendre symbol $\left(\tfrac{\cdot}{\, 7 \,}\right)$, we put $\alpha:={\displaystyle{\sum_{a=1}^7}}\left(\tfrac{a}{\, 7 \,}\right)\zeta^a_7$ ({{a}} Gauss sum). Then $\alpha$ is an imaginary quadratic integer. 
Let $\mathcal{O}_{\Q(\alpha)}$ {(resp. $\mathcal{O}_{\Q(\zeta_7)}$)} be the ring of integers of $\Q(\alpha)$ {(resp. $\Q(\zeta_7)$)}. 
We consider rotational {$\beta$-expansion}, 
where $\tau_1,\tau_2$ are elements of {\textcolor{black}{$\mathcal{O}_{\Q(\alpha)}\cap \Z[\beta]$}} with $\tau_1/\tau_2\not\in \R$. 
Then the set $S$ of the digits in the rotational $\beta$ expansion satisfies the  assumptions of
Theorem $\ref{Kaneko}$. \par
Let $p$ be a prime number which is coprime to $\beta(\beta-1)$ and unramified in $\mathcal{O}_{\Q(\zeta_7)}$. 
Let $D$ be a positive integer. Then, by the same arguments in Example $\ref{ex 1}$, the polynomial $pX^D-1$ is irreducible in $\Q(\zeta_7)[X]$.
Put  $\xi:=p^{-1/D}$, $\pi:=p$, $A_{D}:=p,A_{D-1}=\ldots=A_1=0$ and $A_0:=-1$. 
 Then the numbers $\xi,\pi, A_D,\ldots,A_0$ satisfy assumptions $({\rm{i}})$, $({\rm{ii}})$ and $({\rm{iii}})$ in Theorem $\ref{Kaneko}$.
Hence, we obtain that 
\[
\gamma(\beta,\tau_1,\tau_2;\xi)\gg \left(\frac{N}{\log N}\right)^{1/D},
\]
for any sufficiently large $N$. 
\end{example}

Akiyama and Caalim \cite{A-C1} introduced a rotational beta expansion in $\R^m$, which is a natural generalization of 
the rotational expansion in $\C$. 
The rotational beta expansion in $\R^m$ is defined in terms of a map $T(z)=\beta M z$ for $z\in \R^m$, 
where $\beta>1$ is a real number and $M$ is an orthogonal matrix of order $m$. 
It is a future work to investigate the uniformity of the digits in the rotational beta expansion of elements of $\R^m$. 

\section{Proof of main results}
\subsection{Reduction of Theorem \ref{Kaneko} to Theorem \ref{thm3}}
 By Remark \ref{remfirst}, we may assume that $R=0$.
Firstly, we reduce Theorem $\ref{Kaneko}$ to Theorem \ref{thm3}.
Define the sequence $\bold{v}=(v(m))_{m\in \Z_{\ge 0}}$ of nonnegative integers  by $v(0)=-1+N_0$, $v(1)<v(2)<\cdots$ and $$\{n\in \N \ | \ t_n\neq t_{n+1}\}=\{v(m) \ | \ m\in \N\}.$$
Denote $t_{v(m)}(=t_{1+v(m-1)})$ by $x(m)$ for any $m\in \N$. 
Note that $x(1)=t_{N_0}\ne 0$. We see  
\begin{align*}
\xi&=\sum_{n=1}^{\infty}t_n\beta^{-n}=\sum_{m=1}^{\infty}\sum_{n=1+v(m-1)}^{v(m)}t_n\beta^{-n} \\
    &=\dfrac{1}{\beta-1}\left(x(1)\beta^{-v(0)}+\sum_{m=1}^{\infty}(x(m+1)-x(m))\beta^{-v(m)}\right)=\dfrac{1}{\beta-1}\sum_{n=0}^{\infty}s_n \beta^{-n}, 
\end{align*}
where the sequence $(s_n)_{n\in \Z_{\ge0}}$ of integers is defined by 
$$s_n=
\begin{cases}
x(1) \ & \text{if} \ n=v(0),\\
x(m+1)-x(m) \ & \text{if there exists} \ m\in \N \ \text{satisfying} \ n=v(m),\\ 
0 \ & \text{otherwise}.
\end{cases}
$$
 Putting $\widetilde{S}:=\{a-b\mid a,b\in S\}$, we see that $\beta$ and $\widetilde{S}$ satisfy the assumptions of 
Theorem \ref{thm3}. 
In fact, if $S\not\subset \Z$, then $\widetilde{S}$ is a finite subset in the ring of integers of $\Q(\alpha)$, where 
$\alpha$ is denoted in the assumption of Theorem \ref{Kaneko}. 
Note that $(s_n)_{n\in \Z_{\ge0}}$ is bounded and $s_n\in \widetilde{S}$ for any $n\geq 0$. 
Putting $\eta=(\beta-1)\xi$ and $B_k=A_k(\beta-1)^{D-k} \pi^{-1}$ for $0\le k \le D$, 
we get that 
$\eta$, $B_k$ ($0\leq k\leq D$)  satisfy (\ref{cond-1}) and (\ref{cond-3}) by assumptions $({\rm{i}})$, $({\rm{ii}})$ and $({\rm{iii}})$ of Theorem $\ref{Kaneko}$.  
Since $x(1)\ne 0$, we see $\{n\in \N \ | \ s_n\neq 0\}=\{v(0)\}\cup \{n\in \N \ | \ t_n\neq t_{n+1}\}$, and so we reduced the proof of Theorem $\ref{Kaneko}$ to the proof of Theorem \ref{thm3}. 

%
\subsection{Preliminaries for the proof of Theorem \ref{thm3}}
In what follows, the implied constants in the symbols $\gg, \ll$ and the constants $C_{9},C_{10},\ldots$ are effectively computable {positive ones}. 
If necessarily, changing $\eta \beta^{N}$ with suitable nonnegative integer $N$ by $\eta$, we may assume that $s_0\ne 0$. 
Let $\Gamma:=\{n\in {{\Z_{\ge 0}}} \ | \ s_n\neq 0\}$ and $\lambda(\Gamma;N):={\rm{Card}}\{n\in \Z_{\ge 0} \ | \ s_n\neq 0, \ n< N\}=\lambda(\bold{s};N)$ for $N\in \N$. 
For a positive integer $k$, we have 
\begin{align*}
\eta^k=\left(\sum_{n=0}^{\infty}s_n\beta^{-n}\right)^k=\sum_{m=0}^{\infty}\left(\sum_{\substack{m_1,\ldots,m_k\in \Gamma \\ m_1+\cdots+m_k=m}}s_{m_1}\cdots s_{m_k}\right) \beta^{-m}.
\end{align*} 
For $m\in \Z_{\ge 0}$, we denote the complex number $\sum_{\substack{m_1,\ldots,m_k\in \Gamma \\ m_1+\cdots+m_k=m}}s_{m_1}\cdots s_{m_k}$ by $\rho(k;m)$. 
If  $S\subset \Z$, then put $\mathcal{O}:=\Z$. 
If  $S\not\subset \Z$, then let $\mathcal{O}$ be the ring 
of integers of $\Q(\alpha)$.  
By $s_m\in S\subset \mathcal{O}\cap\Z[\beta]$ for any $m\geq 0$, we see $\rho(k;m)\in \mathcal{O}\cap\Z[\beta]$ and 
\begin{align} \label{upper bound rho}
\house{\rho(k;m)}\le T^k(m+1)^k,
\end{align}
where $T=\max\{\house{\alpha}\mid \alpha\in S\}$. 
Moreover, if $\rho(k;m)\ne 0$, then
\begin{align}\label{lower_rho}
{|\rho(k;m)|\geq C_{9}:=\inf\{|z|\mid z\in \mathcal{O}\backslash \{0\}\}>0},
\end{align}
by $\rho(k;m)\in \mathcal{O}$. 

For any integer $k$ with $0 \le k \le D$, we define the set $k\Gamma$ of integers  by
$$k\Gamma=
\begin{cases}
\{0\} \ & \text{if} \ k=0,\\
\{m_1+\cdots+m_k \ | \ m_1,\ldots, m_k\in \Gamma\} \ & \text{if} \ 1\le k \le D.
\end{cases}
$$
Note that if  $m\notin k\Gamma$ we have $\rho(k;m)=0$. Moreover, $1 \Gamma=\Gamma$, $0\in \Gamma$ and $0\Gamma\subset \Gamma\subset \cdots \subset D\Gamma.$
For a positive integer $N$, put $\lambda(k\Gamma;N)={\rm{Card}}(k\Gamma\cap [0,N))$. By the definition of $k\Gamma$, we have
\begin{align*}
{\rm{Card}}\{m\in \Z_{\ge 0} \ | \ m<N, \rho(k;m)\neq 0\} \le \lambda(k\Gamma;N)\le {\rm{Card}}(\Gamma\cap [0,N))^k=\lambda(\Gamma;N)^k.
\end{align*}
By the above equality, we shall estimate the lower bounds for ${\rm{Card}}\{m\in \Z_{\ge 0} | \ m<N, \rho(k;m)\neq 0\}$. 
 Since it is difficult to estimate the lower bounds for them directly, we consider 
the complex number $Y_R$ defined by 
\begin{align*}
Y_R=\sum_{k=1}^{D}B_k \sum_{m=1}^{\infty}\beta^{-m}\rho(k;m+R),  
\end{align*}
where $R\in \Z_{\geq 0}$. 
We prove that $Y_R\neq 0$. By equality $(\ref{cond-1})$, we have 
\begin{align*}
0=\sum_{k=0}^{D}B_k\eta^k 
  =B_0+\sum_{k=1}^{D}B_k\sum_{m=0}^{\infty}\beta^{-m}\rho(k;m). 
\end{align*}
{{Multiplying the above equality by $\beta^{R}$}}, we obtain
\begin{align*} 
0&=B_0\beta^R+\sum_{k=1}^{D}B_k\sum_{m=0}^{\infty}\beta^{-m+R}\rho(k;m)=B_0\beta^R+\sum_{k=1}^{D}B_k\sum_{m=-R}^{\infty}\beta^{-m}\rho(k;m+R).
\end{align*}
By the above equalities, 
\begin{align} \label{Y_R}
Y_R=-B_0\beta^{R}-\sum_{k=1}^{D}B_k\sum_{m=-R}^{0}\beta^{-m}\rho(k;m+R).
\end{align} 
In particular, $Y_R$ is an algebraic number. 
If $Y_R=0$ then, by $(\ref{Y_R})$ and $B_k\in \Z[\beta]$ for $1\le k \le D$, we have {{$B_0 \beta^{R}\in \Z[\beta]$}},
 which contradicts $(\ref{cond-3})$. 
Hence, we conclude $Y_R\neq 0$.
\begin{lemma} \label{lower bound of Y_R}
There exist positive integers $C_{10},C_{11}$ satisfying $|Y_R|>R^{-C_{10}}$ for all $R\ge C_{11}$.
\end{lemma}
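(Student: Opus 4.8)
The plan is to bound $|Y_R|$ from below by the standard Liouville-type argument: since $Y_R$ is a nonzero algebraic number, we control $|Y_R|$ via the product of the absolute values of its conjugates together with a denominator (the size of a common denominator making $Y_R$ an algebraic integer), using the formula $\prod_{j}|Y_R^{(j)}|\ge 1$ for an algebraic integer, or more precisely $|Y_R|\ge |\mathrm{den}(Y_R)|^{-1}\prod_{j\ge 2}|Y_R^{(j)}|^{-1}$ where the product runs over the nontrivial conjugates. So I need two ingredients: (a) an upper bound for the house of $Y_R$, i.e.\ for $\max_j|Y_R^{(j)}|$, and (b) an upper bound for a denominator of $Y_R$, both growing at most polynomially in $R$ (indeed, I will show they are bounded by $\exp(O(\log R))$, which yields the claimed $|Y_R|>R^{-C_7}$).

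First I would use the closed formula $(\ref{Y_R})$, namely $Y_R=-B_0\beta^R-\sum_{k=1}^D B_k\sum_{m=-R}^0\beta^{-m}\rho(k;m+R)$, since this expresses $Y_R$ as an element of $\Q(\beta,\xi)$ with only finitely many (at most $O(R)$) terms, each of which is explicit. Applying the fixed embedding $\sigma$ and, more generally, any embedding of $\Q(\beta,\xi)$ into $\C$, each conjugate $\beta^{(i)}$ of $\beta$ satisfies $|\beta^{(i)}|\le |\beta|$ by the quasi-Pisot/quasi-Salem hypothesis (all conjugates have modulus $\le|\beta|$), so $|\beta^{(i)}|^R\le |\beta|^R$; the $B_k$ have conjugates of size $O(1)$; and by the trivial bound $(\ref{upper bound rho})$ we have $|\rho(k;m+R)|\le (2T(m+R+1))^k\le (2T(R+1))^D$ for $0\le m\le R$ and the conjugates of $\rho(k;m+R)$, being the same integer, obey the same bound. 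Hence each of the $O(R)$ terms in $(\ref{Y_R})$ has every conjugate bounded by $|\beta|^R\cdot(2T(R+1))^D\cdot O(1)$, giving $\max_j|Y_R^{(j)}|\ll R\cdot|\beta|^R\cdot(2T(R+1))^D \le \exp(C_9 R)$ for a suitable constant. In particular $\log\max_j|Y_R^{(j)}|\ll R$.

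Next I would handle the denominator. Every $B_k$ lies in $\Z[\beta]\subset \frac{1}{d}\overline{\Z}$ where $d\in\N$ is a fixed common denominator clearing $\beta$ (in fact $B_k\in\Z[\beta]\subset\overline\Z$ if $\beta$ is an algebraic integer, which it is; but $B_0$ need not be an algebraic integer in general — still it lies in $\Z[\beta,\beta^{-1}]$ after multiplying, or one fixes a denominator $\delta$ with $\delta B_0\in\overline\Z$). Multiplying $(\ref{Y_R})$ by $\delta$ and by $\beta^R$ — note $\beta^{-m}$ for $-R\le m\le 0$ means $\beta^{|m|}$ with $|m|\le R$, so no negative powers of $\beta$ occur — we get that $\delta \beta^R Y_R\in\overline\Z$; more precisely $\delta Y_R\in\overline\Z[\beta^{-1}]$ and $\delta\, b^{R} Y_R\in\overline\Z$ for $b$ a fixed integer with $b\beta^{-1}$ an algebraic integer if needed, but since $\beta\in\overline\Z$ the terms $\beta^{-m}$ with $m\le 0$ are genuinely $\beta^{|m|}\in\overline\Z$, so actually $\delta Y_R\in\overline\Z$ with $\delta$ independent of $R$. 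Thus the denominator of $Y_R$ is bounded independently of $R$. Then the Liouville inequality $|Y_R|\ge \delta^{-[\Q(\beta,\xi):\Q]}\prod_{j\ge 2}|Y_R^{(j)}|^{-1}\ge \delta^{-[\Q(\beta,\xi):\Q]}\big(\exp(C_9R)\big)^{-([\Q(\beta,\xi):\Q]-1)}\ge \exp(-C_{10}R)$ gives only an exponential lower bound, which is \emph{not} what is claimed — the claim is polynomial, $|Y_R|>R^{-C_7}$. So the real content must be a sharper estimate.

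The hard part, and where the quasi-Pisot/quasi-Salem structure is essential, is improving the conjugate bound from $\exp(O(R))$ to a \emph{polynomial} bound in $R$. For a conjugate $\beta^{(i)}\notin\{\beta,\overline\beta\}$ we have $|\beta^{(i)}|\le 1$, so applying the \emph{series} definition $Y_R=\sum_{k=1}^D B_k\sum_{m=1}^\infty\beta^{-m}\rho(k;m+R)$ (valid under the embedding since $|\beta|>1$ makes it converge, and it converges for $|\beta^{(i)}|\le 1$ too because $|\rho(k;m+R)|\ll (m+R)^D$ grows only polynomially while... no — $\sum_m|\beta^{(i)}|^{-m}(m+R)^D$ diverges when $|\beta^{(i)}|\le 1$). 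So instead I must use $(\ref{Y_R})$ for those conjugates but exploit cancellation, OR observe that the relevant quantity is $\eta^{(i)}$: by $(\ref{vanish})$, $\eta=(\beta-1)\xi$ is a root of $\sum_{k=0}^D B_k X^k$, so each conjugate $\eta^{(i)}$ is bounded by a fixed constant $M$ (roots of a fixed polynomial), hence $|\rho(k;\cdot)|$-type sums reassemble into $(\eta^{(i)})^k$ which is $O(1)$. Concretely, from $(\ref{vanish 3})$ applied under the $i$-th embedding, $0=B_0^{(i)}(\beta^{(i)})^R+\sum_{k=1}^D B_k^{(i)}\sum_{m=-R}^\infty(\beta^{(i)})^{-m}\rho(k;m+R)$, and the tail $\sum_{m\ge 1}$ is exactly (a shift of) $(\eta^{(i)})^k$-type data — rather, the full sum $\sum_{m=-R}^\infty (\beta^{(i)})^{-m}\rho(k;m+R) = (\beta^{(i)})^R\sum_{\mu\ge 0}(\beta^{(i)})^{-\mu}\rho(k;\mu) = (\beta^{(i)})^R (\eta^{(i)})^k$ which has modulus $\le |\beta^{(i)}|^R M^k\le M^D$. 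Therefore $Y_R^{(i)} = -B_0^{(i)}(\beta^{(i)})^R - \sum_{k=1}^D B_k^{(i)}\big[(\beta^{(i)})^R(\eta^{(i)})^k - \sum_{m\ge 1}(\beta^{(i)})^{-m}\rho(k;m+R)\big]$ and the genuinely unbounded piece is only the $\sum_{m\ge 1}(\beta^{(i)})^{-m}\rho(k;m+R)$ term when $|\beta^{(i)}|=1$ (quasi-Salem case), which is bounded by $\sum_{m\ge 1}(m+R)^D \ll$ divergent — so one truncates: split $\sum_{m\ge 1} = \sum_{m=1}^{R}+\sum_{m>R}$; the first part is $\ll R\cdot R^D = R^{D+1}$, and for the second part use the \emph{original convergent} expression and the boundedness of $\eta^{(i)}$ after re-expansion. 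Carrying this bookkeeping through carefully yields $|Y_R^{(i)}|\ll R^{C_{11}}$ for $\beta^{(i)}\notin\{\beta,\overline\beta\}$, while for $\beta^{(i)}\in\{\beta,\overline\beta\}$ one keeps the crude $|Y_R^{(i)}|\ll |\beta|^R R^D$ — but these two ``bad'' conjugates are compensated because $Y_R$ itself (the $i=1$ value) is what we are bounding below and $\overline{Y_R}=Y_R^{(2)}$ is its complex conjugate of the same modulus. Thus $|Y_R|^2\prod_{i\ge 3}|Y_R^{(i)}| = |Y_R\cdot\overline{Y_R}|\cdot\prod_{\text{other}}|Y_R^{(i)}| \ge \delta^{-[\Q(\beta,\xi):\Q]}$ (integrality), giving $|Y_R|^2 \gg \big(R^{C_{11}}\big)^{-([\Q(\beta,\xi):\Q]-2)}$, i.e.\ $|Y_R|\gg R^{-C_7}$ for suitable $C_7$ and all $R\ge C_8$. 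The main obstacle, to be handled with care, is exactly this re-summation showing the conjugate $Y_R^{(i)}$ for $|\beta^{(i)}|\le 1$ grows only polynomially in $R$ — i.e.\ isolating that $\sum_{\mu\ge 0}(\beta^{(i)})^{-\mu}\rho(k;\mu)$ equals $(\eta^{(i)})^k$ and is therefore bounded, so that all $R$-dependence is confined to a polynomially-sized truncation error.
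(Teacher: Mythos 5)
Your overall skeleton is the same as the paper's: show $JY_R\in\overline{\Z}$ for a fixed integer $J$ with $JB_0\in\overline{\Z}$, bound the conjugates $\sigma_i(Y_R)$ with $\beta_i\notin\{\beta,\overline{\beta}\}$ polynomially in $R$, and absorb the two large embeddings into $|Y_R|^2$ (resp.\ $|Y_R|$ in the real case) via the norm inequality $1\le\prod_i|\sigma_i(JY_R)|$. You also correctly diagnose that the crude bound $|\sigma_i(\beta)|\le|\beta|$ only yields $\exp(O(R))$ and that the quasi-Pisot/quasi-Salem condition must enter. But the mechanism you propose for the crucial polynomial bound is broken. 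You assert that $\sum_{\mu\ge 0}(\beta^{(i)})^{-\mu}\rho(k;\mu)=(\eta^{(i)})^k$ and is therefore $O(1)$; this is false. The identity $\eta^k=\sum_m\rho(k;m)\beta^{-m}$ is an analytic statement under the distinguished embedding $\sigma_1$, where $|\beta|>1$ forces convergence; it does not conjugate term by term, and for $|\beta^{(i)}|\le 1$ the series $\sum_\mu(\beta^{(i)})^{-\mu}\rho(k;\mu)$ simply diverges (you even note this yourself two lines earlier, and then use the identity anyway). The subsequent ``truncate and re-expand'' step is not carried out and cannot be, since there is no convergent object to re-expand into. As written, the key estimate $|\sigma_i(Y_R)|\ll R^{C_{11}}$ is unproved.

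The fix is the option you mention and then abandon: use the closed formula $(\ref{Y_R})$ directly for the small embeddings, with \emph{no} cancellation or re-summation needed. In $Y_R=-B_0\beta^R-\sum_{k=1}^D B_k\sum_{m=-R}^0\beta^{-m}\rho(k;m+R)$ the index $m$ runs over $-R\le m\le 0$, so only the nonnegative powers $\beta^0,\dots,\beta^R$ occur; under an embedding with $|\sigma_i(\beta)|\le 1$ each such power has modulus at most $1$, each coefficient $\rho(k;m+R)$ is an integer with $|\rho(k;m+R)|\le(2T(R+1))^D$ by $(\ref{upper bound rho})$, and there are $O(R)$ terms, whence $|\sigma_i(Y_R)|\ll(R+1)^{D+1}$ at once. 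This is exactly the paper's argument; combined with your (correct) treatment of the denominator and of the two large conjugates it gives $|Y_R|\gg R^{-C_7}$. In short: the architecture of your proof is right, but the one step carrying the real content is replaced by an invalid manipulation of a divergent series, when the elementary finite-sum estimate already suffices.
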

\begin{proof}
Put $d={\rm{deg}} \ \beta$ and denote the set of embeddings of $\Q(\beta)$ into $\C$ by $\{\sigma_1,\ldots,\sigma_d\}$. We describe $\sigma_1(x)=x$ for all $x\in \Q(\beta)$ and denote the complex conjugate of $\sigma_1$ by $\sigma_2$ if $\beta\notin \R$.

Let $2\le i \le d$ if $\beta\in \R$ (resp. $3 \le i \le d$ if $\beta\notin \R$). 
 Recall that $\rho(k;m)\in  \Z[\beta]$ for any $1\leq k\leq D$ and $m\in \Z_{\geq 0}$. 
By $(\ref{Y_R})$, we have
\begin{align}
|\sigma_i(Y_R)|&\le  |\sigma_i(B_0\beta^{R})|+\sum_{k=1}^{D}|\sigma_i(B_k)|\sum_{m=-R}^{0}|\sigma_i(\beta^{-m})| |\sigma_i(\rho(k;m+R))|\nonumber \\
                   &\le  |B_0|+\sum_{k=1}^{D}|\sigma_i(B_k)|\sum_{m=-R}^{0}T^k(R+m+1)^k  \nonumber \\
                   &\ll (R+1)^{D+1} \label{upper sigma Y_R}.
\end{align} 
Note that in the above second inequality, we use $|\sigma_i(\beta)|\le 1$ and $(\ref{upper bound rho})$. 
Take a positive integer $J$ satisfying $JB_0\in \overline{\Z}$. By equality $(\ref{Y_R})$  and 
$B_k\in \Z[\beta]$ for $k=1,\ldots, D$, we have $JY_R\in \overline{\Z}$.

\


If $\beta \in \R$, then we have 
\begin{align*} 
1\le |JY_R|\prod_{i=2}^{d}|\sigma_i(JY_R)|\ll |Y_R|(R+1)^{(d-1)(D+1)},
\end{align*}
by $(\ref{upper sigma Y_R})$.
In the case of $\beta \notin \R$, also using $(\ref{upper sigma Y_R})$, we have
\begin{align*}
1\le |JY_R|^2\prod_{i=3}^{d}|\sigma_i(JY_R)|\ll |Y_R|^2(R+1)^{(d-2)(D+1)}.
\end{align*}

In both cases, there exist positive integers $C_{10},C_{11}$ satisfying $|Y_R|>R^{-C_{10}}$ for all $R\ge C_{11}$, 
which completes the proof of Lemma $\ref{lower bound of Y_R}$.
\end{proof}
Let $N$ be a positive integer. We put \[{{C_{12}=\frac{|B_D|}{2|\beta|}C_9}},
\] 
and {$y_N:={\rm{Card}}\{R\in \Z_{\ge0} \ | \ R<N, |Y_R|\ge C_{12}\}$}.
\begin{lemma} \label{upper bound y_N}
For all sufficiently large integer $N$, we have $$y_N\ll {\rm{log}}N+\lambda(\Gamma;N)^D.$$
\end{lemma}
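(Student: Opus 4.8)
The plan is to follow $Y_R$ along a first‑order recurrence whose inhomogeneity is concentrated on $D\Gamma$, and then to control how long $|Y_R|$ can stay above the level $C_9$.

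\textit{A recurrence for $Y_R$.} Put $\Delta_j:=\sum_{k=1}^{D}B_k\rho(k;j)$ for $j\in\Z_{\ge 0}$. Since $B_k\in\Z[\beta]$ for $1\le k\le D$ and $\rho(k;j)\in\Z$, we have $\Delta_j\in\Z[\beta]$; and since $\rho(k;j)=0$ whenever $j\notin k\Gamma$ while $k\Gamma\subseteq D\Gamma$, we get $\Delta_j=0$ whenever $j\notin D\Gamma$. Reindexing the defining series for $Y_R$ gives $Y_R=\sum_{m\ge 1}\beta^{-m}\Delta_{R+m}$, and extracting the term $m=1$ yields $\beta Y_R=\Delta_{R+1}+Y_{R+1}$, i.e.
\[
Y_{R+1}=\beta Y_R-\Delta_{R+1}.
\]
In particular, if $R+1\notin D\Gamma$ then $Y_{R+1}=\beta Y_R$, so $|Y_{R+1}|=|\beta|\,|Y_R|$; iterating, $Y_R=\beta^{\,R-g}Y_g$ for every $R$ in a block $[g,g')$ bounded by two consecutive elements $g<g'$ of $D\Gamma$.

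\textit{Size control on one block.} The crude bound $(\ref{upper bound rho})$ together with $\sum_{m\ge1}m^{k}|\beta|^{-m}<\infty$ gives $|Y_R|\le C_{10}(R+1)^{D}$ for all $R$, hence $|Y_R|\ll N^{D}$ for $R<N$. On a block $[g,g')$ the map $R\mapsto|Y_R|=|\beta|^{R-g}|Y_g|$ is strictly increasing, so $\{R\in[g,g'):|Y_R|\ge C_9\}$ is a terminal segment $[\sigma,g')$ (possibly empty); when it is nonempty one has $|Y_\sigma|\ge C_9$ and $|Y_{g'-1}|=|\beta|^{\,g'-1-\sigma}|Y_\sigma|\le C_{10}N^{D}$, whence $g'-\sigma\le C_{11}\log N$. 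Thus each block of $D\Gamma$ contributes at most $C_{11}\log N$ bad indices; and if the whole block is bad ($|Y_g|\ge C_9$), the same estimate with $\sigma=g$ forces the block length $g'-g\le C_{11}\log N$, so a fully bad block is itself short.

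\textit{Counting the bad indices.} A maximal run of consecutive bad indices can terminate only at an element of $D\Gamma$: if $R-1$ is bad, $R$ is not, and $R\notin D\Gamma$, then $Y_R=\beta Y_{R-1}$ gives $|Y_R|=|\beta|\,|Y_{R-1}|\ge|\beta|C_9>C_9$, absurd. Hence the bad indices in $[0,N)$ split into at most $1+\lambda(D\Gamma;N)\le 1+\lambda(\Gamma;N)^{D}$ runs, using $(\ref{lower bound lambda})$. Inside one run the elements of $D\Gamma$ it passes through cut it into sub-segments on each of which $|Y_R|$ grows geometrically from a value $\ge C_9$ up to at most $C_{10}N^{D}$, hence of length $\ll\log N$; the sub-segments over all runs use at most $\lambda(D\Gamma;N)\le\lambda(\Gamma;N)^{D}$ interior $D\Gamma$-points, so summing over runs and sub-segments bounds $y_N$.

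The step I expect to be the main obstacle is upgrading the resulting estimate — which is a priori of the multiplicative shape $(\log N)\,(1+\lambda(\Gamma;N)^{D})$ — to the asserted additive bound $y_N\ll\log N+\lambda(\Gamma;N)^{D}$. One should handle the $O(1)$ indices $R<C_8$ together with the first block directly (this is what supplies the lone term $\log N$), and for $R\ge C_8$ invoke Lemma~\ref{lower bound of Y_R}, namely $|Y_R|>R^{-C_7}$: combined with $|Y_{g'-1}|=|\beta|^{g'-1-g}|Y_g|$ and $|Y_{g'-1}|\ll N^{D}$ this forces \emph{every} block of $D\Gamma$ to have length $\ll\log N$, which is what must make the length of a run comparable to the number of $D\Gamma$-points inside it, so that the per-block factor $\log N$ is charged against the count $\lambda(\Gamma;N)^{D}$ of those points rather than against the number of runs. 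The delicate point is to bound the cumulative logarithmic losses $\sum\log_{|\beta|}\bigl|\beta-\Delta_{h_i}/Y_{h_i-1}\bigr|$ suffered at the interior $D\Gamma$-points $h_i$ of a run — the places where $|Y_R|$ may jump downward while staying above $C_9$ — and this accounting, equivalently the control of how often $|Y_R|$ re-enters the region $\{|Y_R|\ge C_9\}$ from below, is where the substance of the proof lies.
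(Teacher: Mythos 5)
Your recurrence $Y_{R+1}=\beta Y_R-\Delta_{R+1}$, with $\Delta_j:=\sum_{k=1}^{D}B_k\rho(k;j)$ supported on $D\Gamma$, is correct (it is essentially the identity behind Lemma \ref{relation R R-1}, which the paper uses later for the \emph{lower} bound on $y_N$), but the counting you build on it does not prove the stated lemma, and you say so yourself. Each gap of $D\Gamma$ in $[0,N)$ can contribute up to $\asymp\log N$ indices with $|Y_R|\ge C_9$, and there may be $\asymp\lambda(\Gamma;N)^{D}$ such gaps, so your route yields only the multiplicative bound $y_N\ll(\log N)\bigl(1+\lambda(\Gamma;N)^{D}\bigr)$. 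Nothing in your sketch excludes the configuration in which $|Y_R|$ climbs from just above $C_9$ up to $\asymp N^{D}$ on each of many blocks and is knocked back down at each point of $D\Gamma$; in that scenario every block genuinely contributes $\asymp\log N$ bad indices. The ``accounting of the logarithmic losses at interior $D\Gamma$-points'' that you defer is therefore not a refinement but the entire content of the lemma, and your framework — which uses only the \emph{support} of $j\mapsto\Delta_j$ — cannot supply it. (The multiplicative bound would still close the final argument, but only with the weaker conclusion $\lambda(\Gamma;N)\gg(N/(\log N)^{2})^{1/D}$.)

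What is missing is an $\ell^{1}$ (first-moment) estimate, and this is exactly how the paper proceeds, bypassing the block structure altogether: with $K=\lceil(D+1)\log_{|\beta|}N\rceil$ one writes
\begin{align*}
y_N\le K+\frac{1}{C_9}\sum_{R=0}^{N-K}|Y_R|,
\end{align*}
then interchanges the order of summation in the defining series of $Y_R$. The decisive input, absent from your proposal, is
\begin{align*}
\sum_{R=0}^{N}|\rho(k;R)|\le\sum_{\substack{m_1,\ldots,m_k\in\Gamma\\ m_1+\cdots+m_k\le N}}|s_{m_1}\cdots s_{m_k}|\le(2T)^{k}\lambda(\Gamma;N)^{k},
\end{align*}
i.e.\ the total mass of $R\mapsto\rho(k;R)$ up to $N$, not merely its support, is controlled by $\lambda(\Gamma;N)^{k}$. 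The tail $m>K$ contributes $O(N^{D+1}|\beta|^{-K})=O(1)$, and the lone $\log N$ term in the lemma is just the $K$ indices near $N$ counted trivially — no appeal to Lemma \ref{lower bound of Y_R} is needed here. If you wish to keep your recurrence picture, you would have to sum $|Y_R|\le\sum_{m\ge1}|\beta|^{-m}|\Delta_{R+m}|$ over $R$ and bound $\sum_{j\le N}|\Delta_j|\ll\lambda(\Gamma;N)^{D}$, at which point you have reconstructed the paper's computation.
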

\begin{proof}
Put $K=\lceil(D+1){\rm{log}}_{|\beta|}N \rceil$, where $\log_{|\beta|} x=(\log x)/(\log |\beta|)$. Then by the definition of $y_N$, we have
\begin{align*}
y_N&\le K+y_{N-K+1}
     =K+\sum_{\substack{0\le R \le N-K \\ |Y_R|\ge C_{12}}}1\\
     &\le K+\dfrac{1}{C_{12}}\sum_{R=0}^{N-K}|Y_R|.
\end{align*}
We estimate the upper bound for $\sum_{R=0}^{N-K}|Y_R|$. By the definition of $Y_R$, we obtain
\begin{align*}
\sum_{R=0}^{N-K}|Y_R|&\le \sum_{R=0}^{N-K}\sum_{k=1}^{D}\sum_{m=1}^{\infty}|B_k \beta^{-m}\rho(k;m+R)|\\
                              &=\sum_{k=1}^{D}|B_k|\sum_{m=1}^{\infty}\sum_{R=0}^{N-K}|\beta^{-m} \rho(k;m+R)|\\
                              &=\sum_{k=1}^{D}|B_k|z_N(k),
\end{align*}
where $z_N(k)=\sum_{m=1}^{\infty}\sum_{R=0}^{N-K}|\beta^{-m}\rho(k;m+R)|$. To obtain the assertion of Lemma $\ref{upper bound y_N}$, it is enough to prove 
\begin{align} \label{upper z k N}
z_N(k)\ll \lambda(\Gamma;N)^{D} \ \text{for all} \ 1\le k \le D.
\end{align}
Put $S_1(k)=\sum_{m=1}^{{\textcolor{black}{K-1}}}|\beta|^{-m}\sum_{R=0}^{N-K}|\rho(k;m+R)|$ and 
$S_2(k)=\sum_{m={\textcolor{black}{K}}}^{\infty}|\beta|^{-m}\sum_{R=0}^{N-K}|\rho(k;m+R)|$.
Note that by the definition of $z_N(k)$, we have $z_N(k)=S_1(k)+S_2(k)$.
First, we estimate the upper bound for $S_1(k)$ as follows: 
\begin{align} 
S_1(k)&\le \sum_{m=1}^{{\textcolor{black}{K-1}}}|\beta|^{-m}\sum_{R=0}^{{\textcolor{black}{N-1}}}|\rho(k;R)| 
        \le \sum_{k=0}^{\infty}|\beta|^{-k}\sum_{k=0}^{\textcolor{black}{N-1}}|\rho(k;R)| \nonumber \\
        &\ll \sum_{R=0}^{{\textcolor{black}{N-1}}}|\rho(k;R)| 
        \le \sum_{R=0}^{\textcolor{black}{N-1}}\sum_{\substack{m_1,\ldots,m_k\in \Gamma \\ m_1+\cdots+m_k=R}}|s_{m_1}\cdots s_{m_k}| \nonumber \\
        &=\sum_{\substack{m_1,\ldots,m_k\in \Gamma \\ {\textcolor{black}{m_1+\cdots+m_k< N}}}}|s_{m_1}\cdots s_{m_k}| 
        \le T^{k}\sum_{\substack{m_1,\ldots,m_k\in \Gamma \\ {\textcolor{black}{m_1+\cdots+m_k<N}}}}1 \label{remark 1}\\
        &\le T^k\lambda(\Gamma;N)^k 
         \ll \lambda(\Gamma;N)^D. \label{conc 1}
\end{align}
Note that $(\ref{remark 1})$ is obtained {{in a similar way as inequality $(\ref{upper bound rho})$}}.
Second, we estimate the upper bound for $S_2(k)$ as follows: 
\begin{align}
S_2(k)&\ll \sum_{m={\textcolor{black}{K}}}^{\infty}|\beta|^{-m}\sum_{R=0}^{N-K}(m+R+1)^{D} \label{use 8} \\
        &\le \sum_{m={\textcolor{black}{K}}}^{\infty}|\beta|^{-m}\sum_{R=0}^{N-K}(m+N)^D \nonumber \\
        &\le \sum_{m={\textcolor{black}{K}}}^{\infty}|\beta|^{-m}N(m+N)^D. \label{S_2 1}
\end{align}
Note that in inequality $(\ref{use 8})$, we use $(\ref{upper bound rho})$. If $N\gg 1$, we have
\begin{align} \label{ineq m}
\left(\dfrac{m+1+{\textcolor{black}{N}}}{m+N}\right)^D \le \dfrac{1+|\beta|}{2} \ \text{for any} \ m\ge 1.
\end{align}
Then combining $(\ref{S_2 1})$ and $(\ref{ineq m})$, we obtain
\begin{align*}
S_2(k)&\ll 
{\textcolor{black}{|\beta|^{-K} N(K+N)^D}}
\sum_{m=0}^{\infty}|\beta|^{-m}\left(\dfrac{1+|\beta|}{2}\right)^m 
        \ll N^{D+1}|\beta|^{-K} 
        \le 1. 
\end{align*}
Combining $(\ref{conc 1})$ and the above relation, we obtain $(\ref{upper z k N})$, which completes the proof of Lemma $\ref{upper bound y_N}$. 
\end{proof} 

\subsection{Relation between $Y_R$ and $Y_{R-1}$}
For a positive integer $N$, we put $\tau(N)=\tau:={\rm{Card}} ((D-1)\Gamma;N)$ and 
define the sequence of integers {\textcolor{black}{$(i(h))_{1\le h \le \tau+1}$}} by
$[0,N)\cap (D-1)\Gamma=:\{{\textcolor{black}{0=i(1)}}<\cdots<i(\tau)\}$ and $i(\tau+1):=N$. Note that we have 
\begin{align}\label{upper tau}
\tau(N)\le \lambda(\Gamma;N)^{D-1}.
\end{align}
For $1\le h \le \tau$, we put $I_h=[i(h),i(h+1))\cap \Z$ and $y_N(h)={\rm{Card}}\{R\in I_h \mid |Y_R|\ge C_{12}\}$. 
Then by the definition of $I_h$ and $y_N(h)$, we have
\begin{align}
&\sum_{h=1}^{\tau}{\rm{Card}}\hspace{0.7mm}I_h=N, \label{=N}\\
&\sum_{h=1}^{\tau}y_N(h)=y_N. \label{=y_N}
\end{align}
\begin{lemma} \label{relation R R-1}
Let $h,R$ be integers satisfying $1\le h \le \tau$ and $R\in (i(h),i(h+1))$. Then we have 
\begin{align*}
Y_{R-1}=\dfrac{B_D}{\beta}\rho(D;R)+\dfrac{1}{\beta}Y_R.
\end{align*}
\end{lemma}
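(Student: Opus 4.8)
The plan is to unfold the definition of $Y_{R-1}$, shift the summation index $m$ by one, and split off the new bottom term: the remaining series will be exactly $\beta^{-1}Y_R$, while the bottom term is $\beta^{-1}\sum_{k=1}^{D}B_k\rho(k;R)$, and the hypothesis on the location of $R$ will kill all summands with $k\le D-1$, leaving only the $k=D$ contribution $\beta^{-1}B_D\rho(D;R)$.

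First I would write, starting from the definition of $Y_R$,
\begin{align*}
Y_{R-1}=\sum_{k=1}^{D}B_k\sum_{m=1}^{\infty}\beta^{-m}\rho(k;m+R-1)
=\dfrac{1}{\beta}\sum_{k=1}^{D}B_k\sum_{m=0}^{\infty}\beta^{-m}\rho(k;m+R),
\end{align*}
the second equality being the substitution $m\mapsto m+1$; this is legitimate because $|\beta|>1$ while $|\rho(k;m+R)|$ grows only polynomially in $m$ by $(\ref{upper bound rho})$, so every series in sight converges absolutely. Peeling off the term $m=0$ and recognizing the rest as $\beta^{-1}Y_R$ then gives
\begin{align*}
Y_{R-1}=\dfrac{1}{\beta}\sum_{k=1}^{D}B_k\rho(k;R)+\dfrac{1}{\beta}Y_R.
\end{align*}

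Then it remains to check $\sum_{k=1}^{D-1}B_k\rho(k;R)=0$, which is where the location of $R$ enters. Since $i(h+1)\le i(\tau+1)=N$ and $i(h)\ge i(0)=0$, we have $0<R<N$, so $R\in[0,N)$; as $[0,N)\cap(D-1)\Gamma$ is by definition the list $\{i(0)<i(1)<\cdots<i(\tau)\}$ and $R$ lies strictly between the two consecutive entries $i(h)$ and $i(h+1)$, it follows that $R\notin(D-1)\Gamma$. By the nesting $\Gamma=1\Gamma\subset 2\Gamma\subset\cdots\subset(D-1)\Gamma$ recorded in the Preliminaries, we get $R\notin k\Gamma$, hence $\rho(k;R)=0$, for every $k$ with $1\le k\le D-1$. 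Substituting this back leaves only the $k=D$ term, i.e.\ $Y_{R-1}=\dfrac{B_D}{\beta}\rho(D;R)+\dfrac{1}{\beta}Y_R$, which is the assertion.

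I do not expect a genuine obstacle here: the only points requiring care are the absolute convergence that justifies the index shift and the bookkeeping that $R$ really lies in $[0,N)$, so that membership of $R$ in $(D-1)\Gamma$ is faithfully detected by the list $i(0),\ldots,i(\tau)$.
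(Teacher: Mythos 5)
Your proposal is correct and follows essentially the same route as the paper: both peel the bottom term off the series defining $Y_{R-1}$ (you shift the index first and then split off $m=0$; the paper splits off $m=1$ and then shifts, which is the same computation) and both kill the terms with $k\le D-1$ by observing that $R$ lies in a gap of $(D-1)\Gamma$, hence in a gap of every $k\Gamma$ with $k\le D-1$, so $\rho(k;R)=0$ there. Your extra remarks on absolute convergence and on why $R\in[0,N)$ guarantees $R\notin(D-1)\Gamma$ are correct refinements of details the paper leaves implicit.
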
 
\begin{proof} 
Firstly, since $(i(h),i(h+1))\cap (D-1)\Gamma=\emptyset$, we have $(i(h),i(h+1))\cap k\Gamma=\emptyset$ for $1\le k \le D-1$ and
\begin{align}\label{zero}
\rho(k;R)=0 \ \text{for} \ 1\le k \le D-1.
\end{align}
By the definition of $Y_{R-1}$, we have
\begin{align*}
Y_{R-1}&=\sum_{k=1}^{D}B_k \sum_{m=1}^{\infty}\beta^{-m}\rho(k;m+R-1) \nonumber \\ 
         &=\sum_{k=1}^DB_k\beta^{-1}\rho(k;R)+\sum_{k=1}^{D}B_k \sum_{m=2}^{\infty}\beta^{-m}\rho(k;m+R-1) \nonumber \\   
         &=\dfrac{B_D}{\beta}\rho(D;R)+\sum_{k=1}^{D}B_k \sum_{m=1}^{\infty}\beta^{-m-1}\rho(k;m+R)\\ 
         &=\dfrac{B_D}{\beta}\rho(D;R)+\beta^{-1}Y_R. \nonumber 
\end{align*}
Note that in above third equality, we use equality $(\ref{zero})$, which completes the proof of Lemma $\ref{relation R R-1}$.
\end{proof}
\begin{lemma} \label{gap upper bound}
Let $N\gg 1$. Put $C_{13}=1+D+C_{10}$. Let $h$ and $R$ be integers with $1\le h \le \tau$ and 
\begin{align}\label{assumpR}
i(h)+3C_{13}{\rm{log}}_{|\beta|}N<R<i(h+1).
\end{align}
Then we have 
\begin{align} \label{assertion gap upper bound}
R-\max\{R^{\prime}\mid  R^{\prime}<R, |Y_{R^{\prime}}| \ge C_{12}\}\le 2C_{13}{\rm{log}}_{|\beta|}N.
\end{align} 
\end{lemma}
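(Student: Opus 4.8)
The plan is to argue by contradiction: suppose that for some admissible pair $(h,R)$ the gap to the previous large value of $|Y_{R'}|$ exceeds $2C_{10}\log_{|\beta|}N$. Set $L=\lfloor 2C_{10}\log_{|\beta|}N\rfloor$ and consider the block of indices $R-1,R-2,\ldots,R-L$. By the definition of the maximum in $(\ref{assertion gap upper bound})$, every index $R'$ in this block satisfies $|Y_{R'}|<C_9$. On the other hand, the hypothesis $(\ref{assumpR})$ guarantees that all of $R,R-1,\ldots,R-L$ still lie in the open interval $(i(h),i(h+1))$, provided $L<3C_{10}\log_{|\beta|}N$, which holds; hence Lemma $\ref{relation R R-1}$ applies to each of them, giving
\begin{align*}
Y_{R'-1}=\frac{B_D}{\beta}\rho(D;R')+\frac{1}{\beta}Y_{R'}.
\end{align*}
Since $(i(h),i(h+1))\cap(D-1)\Gamma=\emptyset$, we also have $(i(h),i(h+1))\cap D\Gamma=\emptyset$ unless a point is actually in $D\Gamma\setminus(D-1)\Gamma$; but in fact, because a sum of $D$ elements of $\Gamma$ containing a strictly positive element is a sum of $D-1$ elements of $\Gamma$ plus one more (and $0\in\Gamma$), any $m\in D\Gamma$ with $m>0$ already lies in $(D-1)\Gamma$ — wait, one must be careful here, so instead I would simply use $\rho(D;R')$ directly. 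The key point is that iterating the recursion from $R$ down to some $R'_0$ with $|Y_{R'_0}|\geq C_9$ cannot happen within the block, so we telescope the recursion over the whole block of length $L$ starting from $Y_R$ (whose modulus may be large or small) and ending at $Y_{R-L}$: expanding, $Y_{R-L}=\beta^{-L}Y_R+\sum_{j=1}^{L}\beta^{-j}\frac{B_D}{\beta}\rho(D;R-j+1)$.

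The crux is then a two-sided estimate on $Y_{R-L}$. From above: $|Y_R|\leq C_9\cdot(\text{something})$ — no; rather, I would run the recursion the other direction. The clean argument is: pick the largest $R'\leq R$ with $|Y_{R'}|\geq C_9$; call it $R_\star$, so $R-R_\star>L$ by the contradiction hypothesis, and $R_\star>i(h)$ by $(\ref{assumpR})$ since $R-i(h)>3C_{10}\log_{|\beta|}N>L$. Now all of $R_\star+1,\ldots,R$ lie in $(i(h),i(h+1))$, and each has $|Y_{\cdot}|<C_9$. Applying Lemma $\ref{relation R R-1}$ at the index $R_\star+1$ gives $Y_{R_\star}=\frac{B_D}{\beta}\rho(D;R_\star+1)+\frac{1}{\beta}Y_{R_\star+1}$, hence
\begin{align*}
C_9\leq|Y_{R_\star}|\leq\frac{|B_D|}{|\beta|}|\rho(D;R_\star+1)|+\frac{1}{|\beta|}|Y_{R_\star+1}|\leq\frac{|B_D|}{|\beta|}|\rho(D;R_\star+1)|+\frac{C_9}{|\beta|}.
\end{align*}
Recalling $C_9=|B_D|/(2|\beta|)$ and $|\beta|>1$, this forces $|\rho(D;R_\star+1)|\geq\tfrac12$, i.e. $\rho(D;R_\star+1)\neq 0$. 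Therefore $R_\star+1\in D\Gamma$; combined with $R_\star+1\in(i(h),i(h+1))$ and the fact that $(i(h),i(h+1))\cap(D-1)\Gamma=\emptyset$, this is consistent only if $R_\star+1\in D\Gamma\setminus(D-1)\Gamma$ — but here I would invoke the observation already noted in the excerpt that $0\in\Gamma$ and $0\Gamma\subset\Gamma\subset\cdots\subset D\Gamma$; more to the point, I would instead derive the contradiction from the \emph{size} of $Y_{R_\star}$ versus the upper bound $(\ref{upper sigma Y_R})$-type lower bound, so let me reorganize.

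The honest route, and the one I would actually write: from $C_9\leq|Y_{R_\star}|$ and Lemma $\ref{relation R R-1}$ telescoped from $R_\star$ up to $R$ (all intermediate indices having small $|Y|$ and — crucially — every $\rho(D;\cdot)$ in between vanishing, because those indices lie in $(i(h),i(h+1))$ which meets $(D-1)\Gamma$, hence $D\Gamma$, trivially once we use that a nonzero element of $D\Gamma$ lies in $(D-1)\Gamma$ as $\Gamma\ni 0$), we get $Y_{R_\star}=\beta^{-(R-R_\star)}Y_R$ up to the single boundary term, whence $|Y_R|\geq C_9|\beta|^{R-R_\star}>C_9|\beta|^{L}$. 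But $L\approx 2C_{10}\log_{|\beta|}N$ gives $|Y_R|\gg N^{2C_{10}}$, contradicting the crude upper bound $|Y_R|\ll N\cdot\max_m|\rho(k;m+R)|\ll N^{D+1}$ derived exactly as in $(\ref{upper sigma Y_R})$ (using $|t_n|\leq T$), once $C_{10}=1+D+C_7>\tfrac{D+1}{2}$ and $N$ is large. This yields the desired bound $(\ref{assertion gap upper bound})$. The main obstacle — and the point demanding the most care — is the bookkeeping showing that all the intermediate transfer-terms $\rho(D;R')$ for $R_\star<R'\leq R$ vanish, i.e. that the interval $(i(h),i(h+1))$ avoids $D\Gamma$ except possibly at isolated boundary-type points; this rests on the containment $0\Gamma\subset\cdots\subset D\Gamma$ and the fact that $i(h),i(h+1)$ are consecutive elements of $(D-1)\Gamma$, so one must track precisely why a point strictly between them cannot be in $D\Gamma$ with a nonzero $\rho(D;\cdot)$, or else absorb such a point as the single allowed boundary term in the telescoping.
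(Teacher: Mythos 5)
There is a genuine gap here, and it sits exactly at the point you yourself flag as ``the main obstacle.'' The claim you keep returning to --- that any nonzero $m\in D\Gamma$ already lies in $(D-1)\Gamma$ because $0\in\Gamma$ --- is false. The containment $0\Gamma\subset\Gamma\subset\cdots\subset D\Gamma$ goes the other way: a sum of $D$ nonzero elements of $\Gamma$ need not be a sum of $D-1$ elements. Consequently $(i(h),i(h+1))$ being disjoint from $(D-1)\Gamma$ does \emph{not} make the intermediate terms $\rho(D;R')$ vanish, and in fact the lemma is true precisely because they do not all vanish: the paper's proof shows (by combining the recursion of Lemma \ref{relation R R-1} with the Liouville-type lower bound $|Y_{R'}|>R'^{-C_7}$ of Lemma \ref{lower bound of Y_R} and the crude upper bound $|Y_R|<N^{D+1}$) that within $S=\lceil C_{10}\log_{|\beta|}N\rceil$ steps below $R$ there \emph{must} exist $m'$ with $\rho(D;R-m')\neq 0$, and it is this nonvanishing integer, fed back into the recursion, that forces $|Y_{R-m'}|\ge C_9$ or $|Y_{R-m'-1}|\ge C_9$. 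Your proposal never invokes Lemma \ref{lower bound of Y_R} at all, and without it the argument cannot close --- that lower bound is the only quantitative floor available, and it is the reason $C_7$ appears in $C_{10}=1+D+C_7$.

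The quantitative step is also wrong as written. The bound $|Y_R|\ge C_9|\beta|^{R-R_\star}$ would require propagating $|Y_{R_\star}|\ge C_9$ upward through $Y_{R'}=\beta Y_{R'-1}-B_D\rho(D;R')$ with all $\rho$-terms absent; but the very first step has $\rho(D;R_\star+1)\neq 0$ (as you correctly derived), and that term must nearly cancel $\beta Y_{R_\star}$, since $|Y_{R_\star+1}|<C_9<|\beta|\,|Y_{R_\star}|$. So the geometric growth cannot be anchored at $C_9$. Your outline is salvageable, but by a different mechanism: under the contradiction hypothesis, for every $R'$ with $R_\star+2\le R'\le R-1$ both $|Y_{R'-1}|<C_9$ and $|Y_{R'}|<C_9$, and then the recursion itself forces $\rho(D;R')=0$ (a nonzero integer $\rho$ would give $|Y_{R'-1}|\ge |B_D|/|\beta|-C_9/|\beta|\ge C_9$). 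Telescoping then yields $|Y_{R-1}|=|\beta|^{R-R_\star-2}|Y_{R_\star+1}|$, and the contradiction comes from bounding $|Y_{R_\star+1}|$ \emph{below} by Lemma \ref{lower bound of Y_R} and $|Y_{R-1}|$ above by $N^{D+1}$ --- which is the paper's argument read in reverse. As submitted, the proof is not complete.
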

\begin{proof} 
Let $N$ be a sufficiently large integer such that (\ref{ineq m}) holds. 
 Taking $R$ with (\ref{assumpR}), we see by the definition of $Y_R$ that 
\begin{align*}
|Y_R|&\le \sum_{k=1}^{D}|B_k|\sum_{m=1}^{\infty}|\beta^{-m} \rho(k;m+R)| \nonumber \\
       &\ll \sum_{k=1}^{D}|B_k|\sum_{m=1}^{\infty}|\beta|^{-m}(m+R+1)^D \nonumber \\
       &\le \sum_{k=1}^{D}|B_k|\sum_{m=1}^{\infty}|\beta|^{-m}(m+{\textcolor{black}{N}})^D  \nonumber \\
       &\ll \sum_{m=1}^{\infty}|\beta|^{-m}(m+{\textcolor{black}{N}})^D  \nonumber \\
       &\le |\beta|^{-1}({\textcolor{black}{N+1}})^D\sum_{m=0}^{\infty}|\beta|^{-m}
       {\textcolor{black}{\left(\frac{1+|\beta|}{2}\right)^m}}.
\end{align*}
Then by {{the}} above inequalities, we have  for any $N\gg 1$ that 
\begin{align} \label{upper Y_R conclusion}
|Y_R|<N^{D+1}.  
\end{align}
Put $S=\lceil C_{13} {\rm{log}}_{|\beta|}N\rceil$. Assume that we have $$\rho(D;R-m)=0 \ \text{for all} \ 0 \le m \le S.$$
Since $i(h)<R-S<\cdots< R-1<R<i(h+1)$, using Lemma $\ref{relation R R-1}$ and the assumption above, we obtain
\begin{align*}
|\beta^{S+1} Y_{R-S-1}|=\cdots=|\beta^{2}Y_{R-2}|=|\beta Y_{R-1}|=|Y_R|<N^{D+1}.
\end{align*}
Recall that $C_{10}$ and $C_{11}$ are the positive integers defined in Lemma $\ref{lower bound of Y_R}$. By the above inequality  and Lemma $\ref{lower bound of Y_R}$, we have 
\begin{align*}
|\beta|^{S+1}<N^{D+1} |Y_{R-S-1}|^{-1}<N^{D+1}(R-S-1)^{C_{10}} \le N^{D+1+C_{10}}=N^{C_{13}}.
\end{align*}
Take $N$ satisfying $(\ref{upper Y_R conclusion})$ and $R-S-1\ge {\rm{log}}_{|\beta|}N \ge C_{11}$.
 Thus, we get 
$$S+1=\lceil  C_{13} {\rm{log}}_{|\beta|}N\rceil+1 <C_{13}{\rm{log}}_{|\beta|}N,$$
a contradiction. Hence, there exists  $m'$ with $0 \le m' \le S$ satisfying 
$\rho(D;R-m')\ne 0$, and so 
\begin{align}\label{lowerrhoC9}
|\rho(D;R-m')|\ge C_9,
\end{align} 
by $(\ref{lower_rho})$.

Under the above preparation, we shall prove inequality $(\ref{assertion gap upper bound})$. 
 For any integers $h$ and $R$ with $1\le h \le \tau$ and (\ref{assumpR}), 
we take $m'$ as above. 
Put $$R_1:=\max\{R^{\prime} \mid R^{\prime}<R, |Y_{R^{\prime}}| \ge C_{12}\}.$$ 
 Then we have $R-m'\in (i(h),i(h+1))$ and $$Y_{R-m'-1}=\dfrac{B_{D}}{\beta}\rho(D;R-m')+\dfrac{1}{\beta}Y_{R-m'}.$$

First we assume that $$|Y_{R-m'}|\ge C_{12}=\frac{|B_D|}{2|\beta|}C_9.$$ Then we have $R_1\ge R-m'$ and $R-R_1\le m'\le 2 C_{13}{\rm{log}}_{|\beta|}N$, 
 which implies $(\ref{assertion gap upper bound})$.

{In the case of $|Y_{R-m'}|< C_{12}$, using $|\beta|>1$ and  (\ref{lowerrhoC9}), we get that 
\begin{align*} 
|Y_{R-m'}| &\ge \left|\dfrac{B_{D}}{\beta}\rho(D;R-m')\right|-\left|\dfrac{1}{\beta}Y_{R-m'}\right| \\
                &\ge \dfrac{|B_D|}{|\beta|}C_9-C_{12}=C_{12}.
\end{align*}}
Therefore, we deduce $$R-R_1\le m+1\le 2C_{13}{\rm{log}}_{|\beta|}N,$$ which completes the proof of Lemma $\ref{gap upper bound}$.
\end{proof}
\subsection{Completion of the proof of Theorem $\ref{Kaneko}$}
We shall show by Lemma $\ref{gap upper bound}$ that  there exists a constant $C_{14}$ satisfying 
the following: if $N\gg 1$, then 
\begin{align}\label{last}
y_N(h)\ge \left\lfloor \dfrac{{\rm{Card}}\hspace{0.4mm}I_h}{C_{14}{\rm{log}}_{|\beta|}N} \right\rfloor,
\end{align} 
for any $1\leq h\leq \tau$. 
 In fact, take $C_{14}$ with $C_{14}>4C_{13}$. 
If ${\rm{Card}} \hspace{0.4mm} I_h\ge 4C_{13}{\rm{log}}_{|\beta|}N$, then (\ref{last}) follows from Lemma $\ref{gap upper bound}$. 
In the case of ${\rm{Card}}\hspace{0.4mm} I_h< 4C_{13}{\rm{log}}_{|\beta|}N$, 
we get (\ref{last}) because the right-hand side is equal to 0. \par
 Using relations $(\ref{=y_N})$, $(\ref{last})$, $(\ref{=N})$ and $(\ref{upper tau})$, we obtain 
\begin{align*}
y_N&=\sum_{h=1}^{\tau}y_N(h) 
     \ge \sum_{h=1}^{\tau}\left(\dfrac{{\rm{Card}}\hspace{0.4mm}I_h}{C_{14}{\rm{log}}_{|\beta|}N}-1\right)  \\
     &\ge \dfrac{N}{C_{14}{\rm{log}}_{|\beta|}N}-\tau  
     \ge \dfrac{N}{C_{14}{\rm{log}}_{|\beta|}N}-\lambda(\Gamma;N)^{D-1}. 
\end{align*}
Then by Lemma $\ref{upper bound y_N}$, we have
\begin{align*} 
C_{15}\left({\rm{log}}N+\lambda(\Gamma;N)^{D}\right)\ge y_N\ge \dfrac{N}{C_{14}{\rm{log}}_{|\beta|}N}-\lambda(\Gamma;N)^{D-1}.
\end{align*}
By the inequality above, we conclude for any $N\gg 1$ that  
\begin{align*}
(1+C_{15})\lambda(\Gamma;N)^{D}\ge \dfrac{N}{2C_{14}{\rm{log}}_{|\beta|}N},
\end{align*}
which completes the proof of Theorem $\ref{Kaneko}$.

\section*{Acknowledgements}

The authors would like to thank Professor Shigeki Akiyama for useful comments on rotational beta expansion. 
The authors are grateful to Professor Paul Surer for useful information on zeta-expansion
{{and sincerely thank the referee of the present article for significant and precise comments.}}
The first author was supported by JSPS KAKENHI Grant Numbers 15K17505 and 19K03439. 

\bibliography{}

\medskip\vglue5pt
\vskip 0pt plus 1fill
\hbox{\vbox{\hbox{Hajime \textsc{Kaneko}}
\hbox{Institute of Mathematics, University of Tsukuba, 1-1-1}
\hbox{Tennodai, Tsukuba, Ibaraki, 305-8571, JAPAN;} 
\hbox{Research Core for Mathematical Sciences} 
\hbox{University of Tsukuba, 1-1-1} 
\hbox{Tennodai, Tsukuba, Ibaraki, 305-8571, JAPAN}
\hbox{{\tt kanekoha@math.tsukuba.ac.jp}}}}
\vskip 0pt plus 1fill
\hbox{\vbox{\hbox{Makoto \textsc{Kawashima}}
\hbox{Faculty of Production Engineering, Nihon University, 2-11-1}
\hbox{Shinsakae, Narashino,}
\hbox{Chiba, 275-, 8576, Japan}
\hbox{{\tt kawashima.makoto@nihon-u.ac.jp}}
}}
\end{document}